\numberwithin{equation}{section}
\theoremstyle{plain}
\newtheorem{theorem}[subsection]{Theorem}
\newtheorem{proposition}[subsection]{Proposition}
\newtheorem*{prop-no-number}{Proposition}
\newtheorem*{prop-}{Proposition}
\newtheorem{lemma}[subsection]{Lemma}
\newtheorem{corollary}[subsection]{Corollary}
\theoremstyle{definition}
\newtheorem{defn}[subsection]{Definition}
\newtheorem{remark}[subsection]{Remark}
\renewcommand{\leq}{\leqslant}
\renewcommand{\geq}{\geqslant}
\renewcommand{\subset}{\subseteq}
\def\E{\mathbb{E}}
\def\Z{\mathbb{Z}}
\def\T{\mathbb{T}}
\def\C{\mathbb{C}}
\def\N{\mathbb{N}}
\def\F{\mathbb{F}}
\def\Ghat{\widehat{G}}
\def\fhat{\widehat{f}}
\newcommand{\ud}{\,\mathrm{d}}
\DeclareMathOperator{\Bohr}{Bohr}
\newcommand{\Zmod}[1]{\Z_{#1}} 
\providecommand{\abs}[1]{\lvert#1\rvert}
\providecommand{\norm}[1]{\lVert #1 \rVert}
\providecommand{\ceiling}[1]{\lceil#1\rceil}
\providecommand{\floor}[1]{\left\lfloor#1\right\rfloor}
\providecommand{\fracpart}[1]{\left\{#1\right\}}
\theoremstyle{plain}
\newtheorem*{rep@theorem}{\rep@title}
\newcommand{\newreptheorem}[2]{%
\newenvironment{rep#1}[1]{%
\def\rep@title{#2 \ref*{##1}}%
\begin{rep@theorem}}%
{\end{rep@theorem}}}
\title{Arithmetic progressions in sumsets and $L^p$-almost-periodicity}
\author{Ernie Croot}
\address{School of Mathematics\\
     Georgia Institute of Technology\\
     Atlanta, GA 30332\\
     USA
}
\email{ecroot@math.gatech.edu}
\author{Izabella {\L}aba}
\address{Department of Mathematics\\
     University of British Columbia\\
     Vancouver\\
     B.C. V6T 1Z2\\
     Canada
}
\email{ilaba@math.ubc.ca}
\author{Olof Sisask}
\address{School of Mathematical Sciences\\
     Queen Mary, University of London\\
     Mile End Road\\
     London E1 4NS\\
     United Kingdom
}
\email{O.Sisask@qmul.ac.uk}
\subjclass[2010]{11B30; 11B25}
\begin{document}

\begin{abstract}
We prove results about the $L^p$-almost-periodicity of convolutions. One of these follows from a simple but rather general lemma about approximating a sum of functions in $L^p$, and gives a very short proof of a theorem of Green that if $A$ and $B$ are subsets of $\{1,\ldots,N\}$ of sizes $\alpha N$ and $\beta N$ then $A+B$ contains an arithmetic progression of length at least
\[ \exp\left( c (\alpha \beta \log N)^{1/2} - \log\log N \right). \]
Another almost-periodicity result improves this bound for densities decreasing with $N$: we show that under the above hypotheses the sumset $A+B$ contains an arithmetic progression of length at least
\[ \exp\left( c \left(\frac{\alpha \log N}{\log^3 2\beta^{-1}} \right)^{1/2} - \log( \beta^{-1} \log N) \right). \]
\end{abstract}

\maketitle

\parindent 0mm
\parskip   4mm

\section{Introduction}
Let $A$ and $B$ be subsets of an abelian group. What can one say about the structure of the sumset $A+B$? One of the main endeavours of additive combinatorics is to answer questions along these lines, and one particularly appealing goal is to find long arithmetic progressions in $A+B$ for particular kinds of sets $A$ and $B$. Under the assumption that $A$ and $B$ are `large', a remarkable result in this direction was established by Bourgain \cite{bourgain:longAPs}, who showed that if $A$ and $B$ are subsets of $\{1,\ldots,N\}$ of sizes $\alpha N$ and $\beta N$, then $A+B$ contains an arithmetic progression of length about\footnote{Here, and throughout the paper, we employ the convenient device of letting the letters $c$ and $C$ denote positive absolute constants that may change from occurrence to occurrence.} $\exp( c (\alpha \beta \log N)^{1/3})$. This was subsequently improved by Green \cite{green:longAPs} to the following.

\begin{theorem}[Green]\label{thm:green}
Suppose $A$ and $B$ are subsets of $\{1,\ldots,N\}$ with densities $\alpha$ and $\beta$. Then $A+B$ contains an arithmetic progression of length at least
\[ \exp\left( c (\alpha \beta \log N)^{1/2} - \log\log N\right). \]
\end{theorem}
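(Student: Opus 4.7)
The plan is to use $L^p$-almost-periodicity of the convolution $f = 1_A * 1_B$ to locate a Bohr set on which $f$ is everywhere positive, then extract a long arithmetic progression from that Bohr set. First I would embed $A$ and $B$ in $\Z/M\Z$ for a prime $M$ with $N \leq M \leq CN$, losing only a constant factor while allowing clean use of Bohr set and Fourier language. The sumset $A+B$ in $\Z$ then corresponds to the support of $f$ on $\Z/M\Z$, and an AP in the Bohr set pulls back to an AP in $A+B$.

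The core step is an almost-periodicity argument of Croot--Sisask type, realising the ``simple lemma about approximating a sum of functions in $L^p$'' hinted at in the abstract. Pick $k$ independent uniform samples $a_1,\ldots,a_k \in A$ and form the random approximation $\tilde f = (|A|/k) \sum_i \tau_{a_i} 1_B$ to $f$. A standard even-moment inequality gives $\E \|\tilde f - f\|_p^p \leq (Cp/k)^{p/2} \|1_B\|_p^p |A|^p$, so with positive probability $\|\tilde f - f\|_p$ is small. A pigeonhole on the $M^k$ tuples of samples, grouped by the value of $a_1+\cdots+a_k$, then yields a set $T \subseteq \Z/M\Z$ of density at least $\alpha/2$ such that $\|\tau_t f - f\|_p \leq \epsilon \|1_B\|_p |A|/|B|$ for all $t \in T$, provided $k \geq Cp\epsilon^{-2}$. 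A Chang-type covering argument (passing to iterated sumsets of $T-T$) then produces a Bohr set $B(\Gamma,\rho) \subseteq \Z/M\Z$ of rank $|\Gamma| = O(\epsilon^{-2}\log(1/\alpha))$ and radius $\rho \geq c\epsilon/|\Gamma|$ on which the same almost-periodicity estimate persists, up to a mild worsening of constants.

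With this Bohr set in hand, I would argue positivity as follows: since $\E f = \alpha\beta N$, a Markov/averaging argument shows there is $x_0$ with $f(x_0) \geq c\alpha\beta N$, and the $L^p$-bound forces $|f(x_0+t) - f(x_0)|$ to be small for most $t \in B(\Gamma,\rho)$; choosing $\epsilon$ small relative to $\alpha\beta$ and $p$ large enough (polynomial in $\log(1/\alpha\beta)$) propagates this to \emph{every} $t \in B(\Gamma,\rho)$, giving $x_0 + B(\Gamma,\rho) \subseteq A+B$. Then a Dirichlet/pigeonhole argument shows any Bohr set of rank $d$ and radius $\rho$ in $\Z/M\Z$ contains an AP of length at least $c\rho M^{1/d}$. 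Setting $\epsilon \sim \alpha\beta$, $p \sim \log(1/\alpha\beta)$, $d \sim \epsilon^{-2}\log(1/\alpha)$ and optimising yields length $\exp(c(\alpha\beta\log N)^{1/2} - \log\log N)$, matching the theorem.

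The main obstacle will be the parameter bookkeeping rather than any single conceptual step. In particular, the $L^p$-error must be pushed below the typical scale of $f$ (so positivity survives on the entire translated Bohr set), while simultaneously the rank $d$ must stay of order $\epsilon^{-2}\log(1/\alpha)$ and the radius $\rho$ must not decay too fast in $\epsilon$, since both enter through $\rho M^{1/d}$. Producing the $-\log\log N$ correction (rather than a polynomial loss) requires balancing $p$ against $\log M$ and $\epsilon$ against $\alpha\beta$ simultaneously; this optimisation is where the delicate work lies, whereas the conceptual skeleton — sampling, Chang covering, positivity, Bohr-set-to-AP — is short.
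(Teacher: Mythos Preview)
Your skeleton is sound, but it follows a different---and heavier---route than the paper's, and the numerics you give do not close.

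The paper does \emph{not} sample on the physical side and does not invoke any Chang-type argument.  It samples on the Fourier side: writing $f=\sum_\gamma \fhat(\gamma)\gamma$, Lemma~\ref{lemma:approxLp} approximates $f/\|\fhat\|_{\ell^1}$ in $L^p$ by an average of $k\leq Cp/\epsilon^2$ random characters.  The Bohr set is then simply the common $\epsilon/3$-annihilator of those $k$ characters, so its rank is $Cp/\epsilon^2$ with no covering step at all.  The $L^p$-error is $\epsilon\|\fhat\|_{\ell^1}\leq\epsilon(\alpha\beta)^{1/2}$ by Cauchy--Schwarz and Parseval, so one may take $\epsilon=(\alpha\beta)^{1/2}/e$ rather than $\epsilon\sim\alpha\beta$.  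Positivity is obtained not by fixing a rich point $x_0$ but by a union bound over $t\in P$: one bounds $\E_x\sup_{t\in P}|f(x+t)-f(x)|\leq |P|^{1/p}\epsilon(\alpha\beta)^{1/2}$ and compares with $\E_x f=\alpha\beta$, forcing $|P|\leq e^p$; then $d=Cp/(\alpha\beta)$ and $p=C\sqrt{\alpha\beta\log N}$ balance to give the stated bound.

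Your physical-side route (sample $a_i\in A$, pigeonhole to a period set, then pass to a Bohr set via Chang) is essentially the paper's \emph{second}, more elaborate argument (Theorems~\ref{thm:original_lp}--\ref{thm:strong_lp}), which proves the stronger Theorem~\ref{thm:impAPs}.  Two concrete problems with your execution: (i) the rank after Chang is of order $p\,\epsilon^{-2}\log(1/\alpha)$, not $\epsilon^{-2}\log(1/\alpha)$, since the almost-period set already has density $\alpha^{Cp/\epsilon^2}$ and Chang's rank bound is logarithmic in the inverse density; (ii) with your choice $\epsilon\sim\alpha\beta$ the rank becomes $\sim p(\alpha\beta)^{-2}\log(1/\alpha)$, which is far too large to yield $\exp(c\sqrt{\alpha\beta\log N})$.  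The decisive economy of the paper's short proof is precisely that Fourier-side sampling delivers the Bohr set directly at rank $Cp/\epsilon^2$ and lets one take $\epsilon\sim(\alpha\beta)^{1/2}$, both of which your outline forfeits.
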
 

Green's method of proof was quite different to Bourgain's, which involved establishing a particular kind of almost-periodicity result, and yet a third strategy was employed by Sanders \cite{sanders:longAPs} in giving an alternative proof of the theorem. In this paper we prove almost-periodicity results along similar lines to Bourgain's and those in \cite{croot-sisask} and give applications in the direction of Theorem \ref{thm:green}. Let us first state a particularly clean almost-periodicity result; this yields precisely Green's bounds when applied in the direction of finding arithmetic progressions in sumsets.

\begin{theorem}\label{thm:fourier_lp}
Let $p \geq 2$ and $\epsilon \in (0,1)$ be parameters. Let $G$ be a finite abelian group and suppose $f : G \to \C$ is a function. Then there is a Bohr set $T \subset G$ of rank at most $Cp/\epsilon^2$ and radius $c\epsilon$ such that, for each $t \in T$,
\[ \norm{ f(x+t) - f(x) }_{L^p(x)} \leq \epsilon \norm{\fhat}_{\ell^1}. \]
\end{theorem}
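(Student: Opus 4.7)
The plan is to prove this by random sampling: approximate $f$ in $L^p$ by a short sum of characters drawn from the spectrum according to the natural weights, and then use the Bohr set associated to the sampled characters. This is Bourgain's basic Fourier strategy for $L^p$-almost-periodicity.

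First I would normalise. Setting $\nu(\gamma) := |\fhat(\gamma)|/\norm{\fhat}_{\ell^1}$ (a probability measure on $\Ghat$) and $u(\gamma) := \fhat(\gamma)/|\fhat(\gamma)|$ on its support, Fourier inversion rewrites $f$ as an expectation,
\[ f(x) = \norm{\fhat}_{\ell^1}\,\E_{\gamma \sim \nu}\bigl[u(\gamma)\gamma(x)\bigr]. \]
Choosing $m := \lceil Cp/\epsilon^2 \rceil$ and drawing $\gamma_1,\dots,\gamma_m$ independently from $\nu$, I would work with the empirical approximant
\[ F(x) := \frac{\norm{\fhat}_{\ell^1}}{m}\sum_{i=1}^m u(\gamma_i)\gamma_i(x), \]
which satisfies $\E F(x) = f(x)$ pointwise. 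For each fixed $x$, $F(x)-f(x)$ is $\norm{\fhat}_{\ell^1}/m$ times a sum of $m$ independent, centred, complex random variables of modulus at most $2$; the standard moment bound (Hoeffding applied to real and imaginary parts, or the Marcinkiewicz--Zygmund inequality) then yields
\[ \E|F(x)-f(x)|^p \leq \bigl(C\sqrt{p/m}\,\norm{\fhat}_{\ell^1}\bigr)^p. \]
Swapping the two expectations by Fubini gives the same bound for $\E\norm{F-f}_{L^p(G)}^p$, so if $C$ is chosen large enough some realisation of $(\gamma_1,\dots,\gamma_m)$ gives $\norm{F-f}_{L^p(G)} \leq (\epsilon/3)\norm{\fhat}_{\ell^1}$; I would now fix it.

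Next I would take $T := \Bohr(\{\gamma_1,\dots,\gamma_m\},c\epsilon)$, which by construction has rank $\leq m \leq Cp/\epsilon^2$ and radius $c\epsilon$. For any $t\in T$ the bound $|\gamma_i(t)-1| \lesssim c\epsilon$ holds uniformly in $i$, so
\[ \norm{F(\cdot+t)-F(\cdot)}_{L^\infty(G)} \leq \frac{\norm{\fhat}_{\ell^1}}{m}\sum_{i=1}^m |\gamma_i(t)-1| \leq (\epsilon/3)\norm{\fhat}_{\ell^1}, \]
provided $c$ is taken small enough. Combining with the triangle inequality and translation invariance of $\norm{\cdot}_{L^p}$,
\[ \norm{f(\cdot+t)-f(\cdot)}_{L^p} \leq 2\norm{f-F}_{L^p} + \norm{F(\cdot+t)-F(\cdot)}_{L^p} \leq \epsilon\norm{\fhat}_{\ell^1}, \]
as required.

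The main obstacle is the second step, where one needs the $p$-th moment of the empirical deviation to scale as $(p/m)^{p/2}$ rather than $(p/m)^p$: the sub-Gaussian, rather than sub-exponential, growth of moments of sums of bounded i.i.d.\ centred variables is what turns a naive rank of $p/\epsilon$ into the correct $p/\epsilon^2$. Handling the complex-valued case simply amounts to splitting into real and imaginary parts, which is routine.
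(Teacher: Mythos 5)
Your proposal is correct and follows essentially the same route as the paper: sample characters i.i.d.\ with probabilities proportional to $\abs{\fhat(\gamma)}$, control the $p$-th moment of the empirical deviation via Marcinkiewicz--Zygmund (giving the crucial $(p/m)^{p/2}$ scaling), swap expectations by Fubini, and then pass from the $L^\infty$-almost-periodicity of the short character sum on its Bohr set to the $L^p$ statement by the triangle inequality. The only cosmetic difference is that the paper packages the sampling step as a general lemma about approximating linear combinations of $L^p$-bounded functions, of which your argument is the specialisation to the Fourier expansion.
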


(We give definitions in the next section.) Taking $f = 1_A*1_B$, the proof of Theorem \ref{thm:green} from this result is very quick. What is perhaps surprising is that the proof of this theorem is also very short and involves hardly any Fourier analysis. Instead it relies on a simple and quite general probabilistic argument similar to that employed in \cite{croot-sisask}; this is contained in Section \ref{section:prob_arg}.

Theorem \ref{thm:fourier_lp} is quite general and makes no assumptions on $f$; however, in order for it to be effective we must assume that $\norm{\fhat}_{\ell^1}$ is sufficiently small.  This will indeed be the case in the application to proving Theorem \ref{thm:green}, where $f = 1_A*1_B$ and $\norm{\fhat}_{\ell^1}$ is well controlled by the Cauchy-Schwarz inequality.

By a different and somewhat more involved argument we are also able to establish the following strengthening of Theorem \ref{thm:green}.

\begin{theorem}\label{thm:impAPs}
Suppose $A$ and $B$ are subsets\footnote{We shall always assume that $A$ and $B$ are non-empty, and that $N \geq 3$, to avoid trivialities.} of $\{1,\ldots,N\}$ with densities $\alpha$ and $\beta$. Then $A+B$ contains an arithmetic progression of length at least
\[ \exp\left( c \left(\frac{\alpha \log N}{(\log 2\beta^{-1})^3} \right)^{1/2} - \log( \beta^{-1} \log N) \right). \]
\end{theorem}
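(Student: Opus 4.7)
The bound in Theorem~\ref{thm:green} came from combining Theorem~\ref{thm:fourier_lp} with the Cauchy--Schwarz estimate $\|\widehat{1_A\ast 1_B}\|_{\ell^1}\le(\alpha\beta)^{1/2}|G|$, and the $\beta$ appearing in the exponent is essentially the cost of this estimate. To improve on it for small $\beta$, I would replace Theorem~\ref{thm:fourier_lp} with a convolution-tailored almost-periodicity statement whose Bohr set depends only polylogarithmically on $\beta^{-1}$. Concretely, the target is: for $p\ge2$ and $\epsilon\in(0,1)$, there is a Bohr set $T$ of rank at most $Cp(\log 2\beta^{-1})^3/\epsilon^{2}$ and radius $\gtrsim\epsilon$ such that
\[
\bigl\|\,1_A\!\ast\!1_B(x+t)-1_A\!\ast\!1_B(x)\,\bigr\|_{L^p(x)}\le\epsilon\,\beta\,(\alpha|G|)^{1/p}\qquad(t\in T).
\]

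\textbf{Proving the refined almost-periodicity.} The starting point is again the Croot--Sisask sampling lemma of Section~\ref{section:prob_arg}, applied to the representation $1_A\ast1_B(x)=\sum_{y\in B}1_A(x-y)$: selecting $k\asymp p/\epsilon^2$ points $y_1,\dots,y_k\in B$ uniformly at random makes $\tfrac{|B|}{k}\sum_j 1_A(\cdot -y_j)$ an $L^p$-approximant of $1_A\ast 1_B$ with error of the right order. The issue is that the obvious almost-periodicity Bohr set for this approximant has rank growing with $k$ only and radius governed by the random shifts $y_j$, which makes $B$'s structure irrelevant. To bring $B$'s structure in, I would first pass from $1_B$ to a smoother proxy: by Chang's theorem the $\eta$-large spectrum $\Delta_\eta(B)$ is contained in $\mathrm{Span}(\Lambda)$ for a dissociated set $\Lambda\subset\widehat G$ of size $O(\eta^{-2}\log\beta^{-1})$; equivalently, the Bohr set $B_0=\operatorname{Bohr}(\Lambda,\delta_0)$ has rank $O(\log\beta^{-1})$ and $\|1_B\ast\mu_{B_0}-1_B\|_{L^2}\le\eta\beta^{1/2}|G|^{1/2}$. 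Using this, one may replace $1_B$ by $1_B\ast\mu_{B_0}$ at an acceptable loss in $L^p$, then sample inside the smoothed object; any shift $t\in B_0'$ in a suitable sub-Bohr set of $B_0$ of the same rank then moves each sampled translate of $1_A$ by a controlled amount, giving almost-periodicity with Bohr rank $O(p/\epsilon^2)+O(\log\beta^{-1})$. Iterating/combining this with Rudin's inequality to handle the $L^p$ control (which is where the extra powers of $\log\beta^{-1}$ appear) should yield the claimed rank $Cp(\log 2\beta^{-1})^3/\epsilon^{2}$.

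\textbf{From almost-periodicity to APs.} With the refined statement in hand, I would mimic the deduction that takes Theorem~\ref{thm:fourier_lp} to Theorem~\ref{thm:green}. Embed $\{1,\dots,N\}$ into $\Z_M$ for $M\asymp N$, split into long AP cosets of a suitable subprogression, and pigeonhole to find $x_0$ with $1_A\ast 1_B(x_0)\gtrsim \alpha\beta|B|$; almost-periodicity at level $\epsilon$ ensures $1_A\ast 1_B(x_0+t)>0$ for every $t\in T$, so $(x_0+T)\cap(A+B)$ is nonempty, and any long AP inside $T$ translates to a long AP in $A+B$. The standard Dirichlet argument gives an AP of length $\gtrsim (\epsilon/ d)^{1/d}M^{1/d}$ inside a Bohr set of rank $d$ and radius $\epsilon$; taking $\epsilon$ a small absolute constant and $p\asymp\bigl(\alpha\log N/(\log 2\beta^{-1})^3\bigr)^{1/2}$ balances the exponential factor $M^{1/d}$ against the polynomial prefactor $(\epsilon/d)^{1/d}$, producing precisely the bound claimed.

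\textbf{Main obstacle.} The delicate step is the refined almost-periodicity: one must sample inside $B$ and simultaneously replace $1_B$ by a mollification supported on a low-rank Bohr set without losing more than a $\operatorname{polylog}(\beta^{-1})$ factor in either the $L^p$ error or the rank. Naively, each layer of approximation costs a factor of $\log\beta^{-1}$, and keeping the total to the third power (as opposed to a worse power or a polynomial in $\beta^{-1}$) will require carefully pairing the Croot--Sisask sampling with a Chang/Rudin $L^p$-argument, likely in a single unified probabilistic step rather than sequentially.
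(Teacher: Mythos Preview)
Your overall architecture is right, but both the almost-periodicity step and the deduction differ from the paper in ways that matter.

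\textbf{The almost-periodicity result.} The paper does not smooth $1_B$ by a Bohr set coming from Chang's theorem applied to $B$'s spectrum. Instead it starts from the stronger Croot--Sisask estimate (Theorem~\ref{thm:original_lp}), whose error term is $\epsilon\,\|\mu_A*1_B\|_{p/2}^{1/2}$ rather than your $\epsilon\beta(\alpha|G|)^{1/p}$; this $L^{p/2}$-energy form is essential later. That theorem produces a set $X\subset B$ such that all of $kX-kX$ are almost-periods. The bootstrap to a Bohr set is then Sanders's trick: one replaces $\mu_A*1_B$ by $\mu_A*1_B*\mu_X^{(k)}$, bounds the shifted difference via Fourier inversion, and applies Chang's theorem to the large spectrum of $\mu_X$ (not of $1_B$). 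The point is that $|\widehat{\mu_X}|^k$ kills characters outside the $1/e$-spectrum of $X$, so only a Chang-small set of frequencies survives. Your proposed route---mollify $1_B$ by a low-rank Bohr measure, then sample---would need an $L^p$ (not $L^2$) approximation $\|1_B*\mu_{B_0}-1_B\|_p$ of the right quality, and it is not clear how to get this from Chang and Rudin without losing a power of $\beta^{-1}$.

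\textbf{The deduction.} Your claim that once one finds $x_0$ with $1_A*1_B(x_0)$ large, ``almost-periodicity at level $\epsilon$ ensures $1_A*1_B(x_0+t)>0$ for every $t\in T$'' is not valid: $L^p$-almost-periodicity gives no pointwise control at a pre-chosen $x_0$. The paper instead argues by contradiction: if no translate of the progression $P\subset T$ lies in $A+B$, then for every $x$ there is some $t\in P$ with $\mu_A*1_B(x+t)=0$, whence
\[
\|\mu_A*1_B\|_p^p \le \sum_{t\in P}\|\mu_A*1_B(\cdot+t)-\mu_A*1_B\|_p^p \le |P|\,\epsilon^p\,\|\mu_A*1_B\|_{p/2}^{p/2}.
\]
Combining this with $\|\mu_A*1_B\|_p^p\ge\|\mu_A*1_B\|_{p/2}^p/\mu_G(A+B)$ and H\"older forces $|P|>e^p$ once $\epsilon=1/e\sqrt{K_B}$; note this choice of $\epsilon$ is \emph{not} an absolute constant, contrary to your plan, and the $\alpha$ in the final exponent enters precisely through $1/\epsilon^2=e^2K_B$ in the rank bound. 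With your error term and constant $\epsilon$, the analogous inequality would require $|P|<\alpha^{p-1}/\epsilon^p$, which is useless.

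So the missing ingredients are: the $\|\mu_A*1_B\|_{p/2}^{1/2}$ error form from the refined Croot--Sisask moment estimates, the Sanders-style bootstrap via $\mu_X^{(k)}$ with Chang applied to $X$, and the contradiction argument exploiting the $p/2$-energy. (The paper also routes through a small-doubling version, Theorem~\ref{thm:impAPsDoubling}, via a two-set modelling lemma, though for the density statement your direct embedding suffices.)
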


The improvement of this over Theorem \ref{thm:green} enters when one of the sets has density decreasing with $N$. Note, for example, that for Theorem \ref{thm:green} to yield a non-trivial conclusion at least one of the sets needs to have density at least $\log \log N/(\log N)^{1/2}$, and each set density at least $(\log \log N)^2/\log N$, whereas Theorem \ref{thm:impAPs} allows for both sets to have density about $(\log \log N)^C/\log N$, and in fact one of the sets can have density as low as $\exp\left(- (\log N)^{c}\right)$.

We shall in fact deduce this theorem from the following small-doubling version, where the sets involved are not assumed to be dense in an interval but rather satisfy a small sumset hypothesis.

\begin{theorem}\label{thm:impAPsDoubling}
Suppose $A$ and $B$ are sets of integers satisfying $\abs{A+B} \leq K_A\abs{A}, K_B\abs{B}$. Then $A+B$ contains an arithmetic progression of length at least
\[ \exp\left( c \left(\frac{\log{\abs{A+B}}}{K_B (\log 2K_A)^3} \right)^{1/2} - \log(2K_A\log{2\abs{A}}) \right). \]
\end{theorem}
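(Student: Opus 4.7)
The plan is to deduce Theorem~\ref{thm:impAPsDoubling} via the familiar three-step route used to pass from Theorem~\ref{thm:fourier_lp} to Theorem~\ref{thm:green}, but with the almost-periodicity ingredient replaced by a doubling-sensitive version. Specifically, the steps are: (a) pass to a model cyclic group $\Z_N$ through a Freiman embedding; (b) establish $L^p$-almost-periodicity inside $\Z_N$ for $f=1_{A'}\ast 1_B$ with a Bohr set whose rank depends on $K_B$ rather than on the density of $B$ in $\Z_N$; and (c) extract a long AP from that Bohr set by Dirichlet.

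For step (a) I would apply a Plünnecke--Ruzsa-style rectification argument to produce a subset $A'\subseteq A$ with $\abs{A'}\geq\abs{A}/2$, a prime $N\leq CK_A\abs{A}\log 2\abs{A}$, and a Freiman $2$-isomorphism $\phi$ from $A'+B$ into $\Z_N$ that lifts APs of length up to the target length back to APs in $\Z$. The overhead $\log 2\abs{A}$ and the factor $K_A$ here are what account for the subtracted $\log(2K_A\log 2\abs{A})$ term in the stated bound.

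For step (b), inside $\Z_N$ I would apply the probabilistic argument of Section~\ref{section:prob_arg} to $f=1_{A'}\ast 1_B$, but sampling from $B$ itself, exploiting that $f/\abs{B}=\E_{b\in B}\tau_{-b}1_{A'}$ is a convex combination of translates of a single indicator. This should yield a Bohr set $T\subseteq\Z_N$ of rank $d\lesssim pK_B(\log 2K_A)^C$ and radius $\rho\geq c\epsilon$ such that $\norm{\tau_t f-f}_{L^p}\leq\epsilon\norm{f}_{L^1}$ for every $t\in T$, with the crucial feature that $d$ scales with $K_B=\abs{A+B}/\abs{B}$ rather than $N/\abs{B}$. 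In step (c) a standard Dirichlet argument in $T$ produces $r\in\Z_N$ with $\{jr:0\leq j<L\}\subseteq T$ for some $L\gtrsim\rho N^{1/d}$, and the inclusion--exclusion estimate
\[
\sum_{j<L}\abs{\{x\in A+B:x+jr\notin A+B\}}\leq\sum_{j<L}\norm{\tau_{jr}f-f}_{\ell^1}\lesssim L\epsilon\cdot\abs{A}\abs{B}
\]
forces some $x_0$ with $x_0+jr\in A+B$ for every $j<L$ as soon as $L\epsilon$ is less than an absolute constant, giving an AP of length $L$ in $A'+B\subseteq A+B$ that transports back to $\Z$ via $\phi$. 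Optimizing $p=2$ against the constraints and substituting $N\sim K_A\abs{A}\log\abs{A}$ then yields the claimed square-root bound.

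The main obstacle is step (b): a direct appeal to Theorem~\ref{thm:fourier_lp} would give rank $\sim p/\epsilon^2$ independent of $K_B$ but introduce $\norm{\widehat f}_{\ell^1}\leq\sqrt{\alpha\beta}$ on the right-hand side, recovering only the $\beta^{-1}$-type dependence of Theorem~\ref{thm:green}. What is needed is to rerun the $L^p$-sampling argument against a uniform measure on $B$ so that the sampling variance is controlled by $\abs{A+B}/\abs{B}=K_B$ rather than by $N/\abs{B}$, together with enough Bohr-regularity machinery to absorb the $(\log 2K_A)^3$ factor. Once this tailored almost-periodicity statement is in hand, the remainder of the argument is relatively standard.
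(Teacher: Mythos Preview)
Your three-step scaffold (model into a cyclic group, prove almost-periodicity there, extract an AP from the Bohr set) matches the paper's, but the crucial middle step is where your proposal has a genuine gap, and it propagates into step (c).

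\textbf{Step (b).} You correctly identify that one should sample from $B$ rather than from $\widehat G$: writing $\mu_A*1_B$ as an average over $b\in B$ of translates of $1_A$ and applying the Marcinkiewicz--Zygmund argument is precisely the Croot--Sisask mechanism, and it does give the $K_B$-dependence you want. But the output of that sampling is a \emph{subset} $X\subset B$ (and then an iterated sumset $kX-kX$) of almost-periods, not a Bohr set. Your sentence ``This should yield a Bohr set $T\subset\Z_N$ of rank $d\lesssim pK_B(\log 2K_A)^C$'' hides exactly the missing idea. The paper supplies it via a Sanders-style bootstrap: one smooths $\mu_A*1_B$ by convolving with $\mu_X^{(k)}$, uses Fourier inversion to bound the translation error by a sum over characters weighted by $|\widehat{\mu_X}(\gamma)|^k$, and then applies \emph{Chang's theorem} to the large spectrum of $X$ to replace the set of almost-periods by a low-rank Bohr set. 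The three factors of $\log 2K_A$ arise from (i) the Croot--Sisask exponent, and (ii) the choice $k\asymp\log(K_A/\epsilon)$ in this bootstrap, squared. This is not ``Bohr-regularity machinery''; it is a spectral large-sieve step, and without it one does not obtain a Bohr set at all.

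\textbf{Step (c).} Two issues. First, your inequality $\abs{\{x\in A+B:x+jr\notin A+B\}}\leq\norm{\tau_{jr}f-f}_{\ell^1}$ is false as written: when $f(x)>0$ and $f(x+jr)=0$, the contribution to the $\ell^1$-norm is $f(x)$, not $1$, and $f$ can be arbitrarily small on its support. Second, and more structurally, fixing $p=2$ cannot give the square-root in the theorem. The paper's almost-periodicity bound is of the form $\norm{\tau_t(\mu_A*1_B)-\mu_A*1_B}_p\leq\epsilon\norm{\mu_A*1_B}_{p/2}^{1/2}$, i.e.\ with the \emph{higher-order energy} on the right, not $\norm{f}_{L^1}$. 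If no translate of $P$ lies in $A+B$, then for every $x$ there is $t\in P$ with $\mu_A*1_B(x+t)=0$, whence
\[
\abs{P}\,\epsilon^p\norm{\mu_A*1_B}_{p/2}^{p/2}\ >\ \norm{\mu_A*1_B}_p^p\ \geq\ \norm{\mu_A*1_B}_{p/2}^{p}\big/\mu_G(A+B),
\]
and a H\"older computation with $\epsilon\asymp 1/\sqrt{K_B}$ forces $\abs{P}>e^p$. One then optimizes $p\asymp\sqrt{\log N/(K_B(\log 2K_A)^3)}$; this is where the square-root comes from.

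\textbf{Step (a).} A minor difference: the paper uses a two-set variant of Ruzsa's model lemma together with a Pl\"unnecke bound on $\abs{2A-2A+2B-2B}$ to embed (subsets of) \emph{both} $A$ and $B$ into $\Z_N$ with $N\leq CK_A^C\abs{B}$, rather than rectifying only $A$.
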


Theorem \ref{thm:impAPs} follows on noting that if $A$ and $B$ are subsets of $\{1,\ldots,N\}$ then $\abs{A+B} \leq 2N$, whence we may take $K_A = 2\alpha^{-1}$ and $K_B = 2\beta^{-1}$. 

Behind this theorem lies another almost-periodicity result; this time the driving force is a combination of the main result from \cite{croot-sisask} with an observation from the remarkable paper \cite{sanders:bogolyubov} of Sanders. The result, Theorem \ref{thm:strong_lp}, is more naturally stated in the context of other results and so we postpone it till Section \ref{section:strong_lp}.

The remainder of this paper is laid out as follows. In the next section we fix notation and note some preliminary lemmas. In Section \ref{section:prob_arg} we prove a general lemma on approximating functions in $L^p$; this forms the heart of our proof of Theorem \ref{thm:fourier_lp}, which we deduce together with Theorem \ref{thm:green} in Section \ref{section:simple_periodicity}. Together these sections provide all the material needed for the reader interested purely in a short proof of Theorem \ref{thm:green}. The proof of Theorem \ref{thm:impAPsDoubling} proceeds by different means, starting in Section \ref{section:model} where we note a variant of Ruzsa's model lemma that works efficiently with two sets. In Section \ref{section:strong_lp} we state and prove Theorem \ref{thm:strong_lp}, the almost-periodicity result required for Theorem \ref{thm:impAPsDoubling}, and deduce the latter theorem. Finally, in Section \ref{section:remarks} we make some further remarks, including details about the finite field versions of our results.

\subsection*{Acknowledgement}
The first author was supported by NSF grant DMS-1001111, the second author by an NSERC Discovery Grant, and the third author in parts by an NSERC Discovery Grant and an EPSRC Postdoctoral Fellowship. The third author would also like to acknowledge the hospitality of the University of British Columbia, where part of this work was done.

\section{Notation, definitions and preliminaries}
We now fix notation and give definitions. Most of what we use is standard in the additive combinatorial literature; as a consequence we shall be rather brief, referring the unfamiliar reader to the book \cite{tao-vu} of Tao and Vu for more information about any of the following notions.

For a finite abelian group $G$ we write $\Ghat = \{ \gamma : G \to \C^\times : \text{$\gamma$ a homomorphism} \}$ for its dual group, and we define the Fourier transform $\fhat : \Ghat \to \C$ of a function $f : G \to \C$ by $\fhat(\gamma) = \E_{x \in G} f(x) \overline{\gamma(x)}$. Here, and elsewhere in this paper, the notation $\E_{x \in X} = \abs{X}^{-1}\sum_{x \in X}$ denotes the average over a finite set $X$; when $X$ is clear from the context we may write simply $\E_x$. We define the \emph{convolution} of two functions $f, g : G \to \C$ by $f*g(x) = \E_y f(y) g(x-y)$. With these normalizations, the Fourier inversion formula, Parseval's identity and the convolution identity take the form
\begin{gather*}
f(x) = \sum_{\gamma \in \Ghat} \fhat(\gamma) \gamma(x), \\
\E_{x \in G} \abs{f(x)}^2 = \sum_{\gamma \in \Ghat} \abs{\fhat(\gamma)}^2, \text{ and}\\
\widehat{f*g}(\gamma) = \fhat(\gamma) \widehat{g}(\gamma).
\end{gather*}

We plainly need the notion of a Bohr set; our definition here is not identical to that in \cite{tao-vu} but is essentially equivalent.

\begin{defn}
Let $G$ be a finite abelian group. For a set $\Gamma \subset \Ghat$ of characters and a number $\delta \geq 0$ we define
\[ \Bohr_G(\Gamma, \delta) = \{ x \in G : \abs{\gamma(x) - 1} \leq \delta \text{ for all $\gamma \in \Gamma$} \} \]
and call this\footnote{There is an abuse of notation here; technically one should specify the triple $(G,\Gamma,\delta)$ and not just the set $\Bohr_G(\Gamma, \delta)$ in order to discuss the rank and radius of a Bohr set.} a \emph{Bohr set} of \emph{rank} $\abs{\Gamma}$ and \emph{radius} $\delta$.
\end{defn}

A Bohr set is thus a set of approximate annihilators of a set of characters, and it is these days well-known that such sets play an important role in additive combinatorics. Of critical importance is that Bohr sets are highly structured:

\begin{lemma}\label{lemma:bohr_sets}
Suppose $T$ is a Bohr set of rank $d \geq 1$ and radius $\delta \in (0,2]$ in a finite abelian group $G$. Then $\abs{T} \geq (\delta/2\pi)^d \abs{G}$. Furthermore, if $G = \Zmod{N}$ is a group of residues modulo a prime $N$, then $T$ contains an arithmetic progression of size at least $\frac{1}{2\pi}\delta N^{1/d}$.
\end{lemma}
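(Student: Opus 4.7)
The plan is to base both conclusions on the \emph{phase map} $\Phi : G \to (\R/\Z)^d$ sending $x \mapsto (\phi_1(x),\ldots,\phi_d(x))$, where each $\gamma_j \in \Gamma$ is written as $\gamma_j(x) = e^{2\pi i \phi_j(x)}$. The elementary inequality $\abs{\gamma_j(x) - 1} = 2\abs{\sin(\pi\phi_j(x))} \leq 2\pi \norm{\phi_j(x)}_{\R/\Z}$ shows that $\norm{\phi_j(x)}_{\R/\Z} \leq \delta/(2\pi)$ for every $j$ already forces $x \in T$; I shall also use that $\Phi$ is a group homomorphism.

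For the size estimate, I would employ an averaging-over-shifts argument. Let $B' = \{y \in (\R/\Z)^d : \norm{y_j}_{\R/\Z} \leq \delta/(4\pi) \text{ for all } j\}$, a box of Haar measure $(\delta/(2\pi))^d$. A uniform shift $t \in (\R/\Z)^d$ satisfies
\[ \E_t \abs{\{x \in G : \Phi(x) \in t + B'\}} = (\delta/(2\pi))^d \abs{G}. \]
Fix $t$ achieving at least this average, and let $S$ be the resulting set. For any $x_1, x_2 \in S$, the difference $\Phi(x_1) - \Phi(x_2) = \Phi(x_1 - x_2)$ lies in $B' - B' \subset \{y : \norm{y_j}_{\R/\Z} \leq \delta/(2\pi)\}$, so $x_1 - x_2 \in T$. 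Fixing $x_2$ then shows $\abs{T} \geq \abs{S} \geq (\delta/(2\pi))^d \abs{G}$.

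For the progression in $\Z_N$ with $N$ prime, I would apply Dirichlet's simultaneous approximation to the $d$ reals $\xi_j/N$, where $\gamma_j$ corresponds to $\xi_j \in \Z_N$: there is some $x \in \{1,\ldots,N-1\}$ with $\norm{x\xi_j/N}_{\R/\Z} \leq N^{-1/d}$ for every $j$, equivalently $\norm{\phi_j(x)}_{\R/\Z} \leq N^{-1/d}$. Then for $k = 0, 1, \ldots, L$ with $L = \lfloor \delta N^{1/d}/(2\pi)\rfloor$, the triangle inequality in $\R/\Z$ combined with the homomorphism property gives $\norm{\phi_j(kx)}_{\R/\Z} \leq k N^{-1/d} \leq \delta/(2\pi)$, so $kx \in T$. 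Primality of $N$ enters exactly here, ensuring that $0, x, 2x, \ldots, Lx$ are pairwise distinct modulo $N$ (using $L < N$ and $\gcd(x, N) = 1$), so they form a genuine arithmetic progression inside $T$ of length at least $\delta N^{1/d}/(2\pi)$.

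The main obstacles are minor and essentially combinatorial: taking the averaging box at the half-scale $\delta/(4\pi)$ is what produces the exact constant $2\pi$ in the size bound rather than a larger one that naive pigeonhole would yield, and invoking primality of $N$ in the last step is essential to prevent the multiples $kx$ from collapsing modulo $N$. No Fourier analysis beyond the definition of characters is required.
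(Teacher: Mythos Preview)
The paper does not supply its own proof of this lemma; it simply cites \cite[\S4.4]{tao-vu}. Your argument is essentially the standard one found there---the averaging/pigeonhole argument for the size bound and Dirichlet simultaneous approximation for the progression---and it is correct. One small point worth tightening: when you invoke Dirichlet for the reals $\xi_j/N$, the pigeonhole on the points $\Phi(0),\ldots,\Phi(N)$ has $\Phi(0)=\Phi(N)$, so you are really working with only $N$ points and must take slightly fewer than $N$ boxes; this means the bound $\norm{\phi_j(x)}_{\R/\Z}\leq N^{-1/d}$ is not quite on the nose but holds with $(N-1)^{-1/d}$ or $1/\lfloor N^{1/d}\rfloor$ instead. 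This only perturbs the constant $1/2\pi$ immaterially (and in any case the paper treats such constants as absorbable into the generic $c$), so the argument stands.
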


See for example \cite[\S4.4]{tao-vu} for proofs.

For a finite abelian group $G$ we write $\mu_G$ for the normalized counting measure $\mu_G(A) = \abs{A}/\abs{G}$ on $G$, and for a subset $X$ of $G$ we write $\mu_X = 1_X/\mu_G(X)$. For a function $f : X \to \C$ we use the notation $\norm{f}_{\ell^p} = (\sum_{x \in X} \abs{f(x)}^p)^{1/p}$ for the $\ell^p$ norm of $f$ and $\norm{f}_{L^p} = \norm{f}_p = (\E_x \abs{f(x)}^p)^{1/p}$ for the $L^p$ norm of $f$. 

Finally, we write $\T = \{ z \in \C : \abs{z} = 1 \}$ for the unit circle in $\C$.

\section{Approximating a linear combination of functions in $L^p$}\label{section:prob_arg}
In this section we present a rather general lemma about approximating a linear combination of functions by a combination of just a few of those functions. Theorem \ref{thm:fourier_lp}, the simple almost-periodicity result, will be a simple deduction from this lemma. To illustrate the lemma, let us sketch how it works as applied in the proof of Theorem \ref{thm:fourier_lp}. To prove Theorem \ref{thm:fourier_lp} it suffices to approximate $f$ in $L^p$ by a linear combination of a few characters. The Fourier inversion formula gives us an expression
\begin{align} f = \sum_{\gamma \in \Ghat} \fhat(\gamma) \gamma \label{eqn:FI} \end{align}
for $f$ as a linear combination of characters, but too many to be of use. The idea, then, is to reduce the number of characters used by sampling some at random. According to what distribution ought we to sample? An answer is suggested directly by \eqref{eqn:FI}: one can view this expression essentially as the expectation of a random character $\chi$, where $\chi$ is picked to be the character $\gamma$ with probability proportional to $\fhat(\gamma)$ (ignoring technicalities). Taking an average of a few such random characters then approximates $f$ well in $L^p$ by the law of large numbers, yielding what we want.

The version of the law of large numbers we shall use is the following inequality of Marcinkiewicz and Zygmund.

\begin{lemma}[Marcinkiewicz-Zygmund inequality]
Suppose $X_1, \ldots, X_n$ are independent, mean zero (complex-valued) random variables with $\E \abs{X_i}^p < \infty$. Then
\[ \E \abs{ \sum_{j=1}^k X_j }^p \leq (Cp)^{p/2} \E \left( \left( \sum_{j=1}^k \abs{X_j}^2 \right)^{p/2} \right).\]
\end{lemma}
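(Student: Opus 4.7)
The plan is to prove this by the classical symmetrization-and-Khintchine approach, which turns arbitrary mean-zero summands into signed sums to which a scalar square-function bound can be applied.

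First I would symmetrize. Let $X_1', \ldots, X_k'$ be an independent copy of the sequence $X_1, \ldots, X_k$, and write $\E'$ for expectation with respect to this copy. Since $\E' X_j' = 0$, we have $\sum_j X_j = \E'\bigl[\sum_j (X_j - X_j')\bigr]$, and Jensen's inequality applied to the convex function $|\cdot|^p$ yields
\[ \E\Bigl|\sum_{j=1}^k X_j\Bigr|^p \leq \E\E'\Bigl|\sum_{j=1}^k (X_j - X_j')\Bigr|^p. \]
Now the symmetric variables $Y_j := X_j - X_j'$ have the same joint distribution as $(\epsilon_j Y_j)$ for any independent Rademacher signs $\epsilon_j \in \{\pm 1\}$, so averaging over an independent sign vector $\epsilon$ leaves the left side unchanged:
\[ \E\Bigl|\sum_j Y_j\Bigr|^p = \E_Y \E_\epsilon \Bigl|\sum_j \epsilon_j Y_j\Bigr|^p. \]

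Next I would apply Khintchine's inequality conditional on the values of the $Y_j$. Splitting into real and imaginary parts if necessary, the scalar Khintchine bound gives, for any fixed complex numbers $a_1, \ldots, a_k$,
\[ \E_\epsilon\Bigl|\sum_j \epsilon_j a_j\Bigr|^p \leq (Cp)^{p/2}\Bigl(\sum_j |a_j|^2\Bigr)^{p/2}. \]
This is the substantive input; I would treat it as a standard fact (it is usually proved by expanding $\E_\epsilon \exp(\lambda \sum \epsilon_j a_j)$ with $\cosh(x) \leq e^{x^2/2}$ and optimizing, yielding subgaussian tails and hence the $p$th-moment estimate). Applying it with $a_j = Y_j$ and integrating out $Y$ yields
\[ \E\Bigl|\sum_j X_j\Bigr|^p \leq (Cp)^{p/2}\, \E\Bigl(\sum_j |X_j - X_j'|^2\Bigr)^{p/2}. \]

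Finally I would desymmetrize. Using $|X_j - X_j'|^2 \leq 2|X_j|^2 + 2|X_j'|^2$ and then the Minkowski (or convexity) inequality for the function $t \mapsto t^{p/2}$ with $p\geq 2$, followed by the fact that $X_j'$ and $X_j$ have the same distribution, gives
\[ \E\Bigl(\sum_j |X_j - X_j'|^2\Bigr)^{p/2} \leq 2^{p/2+1}\, \E\Bigl(\sum_j |X_j|^2\Bigr)^{p/2}, \]
and the factor $2^{p/2+1}$ can be absorbed into the constant $C$. The only genuine obstacle is the conditional Khintchine step; everything else is routine manipulation of expectations and a triangle inequality.
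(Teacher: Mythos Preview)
Your proposal is correct and follows precisely the route the paper itself indicates: the paper does not give a detailed argument but remarks that the stated dependence on $p$ comes from Khintchine's inequality for Rademacher sums together with the standard symmetrization/randomization trick, which is exactly what you carry out. The only quibble is that your desymmetrization constant $2^{p/2+1}$ should come out as $2^{p}$ via the convexity/Minkowski step, but this is harmless since it is absorbed into $C$.
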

This is proved in many places, though attention is often not paid to the dependence on $p$ in the multiplicative constant. One route to get this particular dependence on $p$ is to note that one has this form for the bound for Rademacher random variables via Khintchine's inequality \cite{khintchine}, and that one can deduce the result for general real-valued random variables from this by the techniques of symmetrization and randomization; see for example \cite[\S 3.8]{gut}. The complex-valued case follows easily from the real case.

With this in hand, the following lemma implements the idea outlined above in a slightly more general setup. For a non-zero complex number $\lambda$ we write $\lambda^{\circ} = \lambda/\abs{\lambda} \in \T$ for the direction on the unit circle determined by $\lambda$, and we set $0^{\circ} = 0$.

\begin{lemma}\label{lemma:approxLp}
Let $p \geq 2$ and $\epsilon \in (0,1)$ be parameters, and let $(X,\mu)$ be a probability space. Suppose
\[ f = \lambda_1 g_1 + \cdots + \lambda_N g_N \]
where each $\lambda_j \in \C$ and each $g_j : X \to \C$ is a measurable function with $\norm{g_j}_{L^p(\mu)} \leq 1$. Then there is a positive integer $k \leq Cp/\epsilon^2$ and a $k$-tuple $\sigma \in [N]^k$ such that
\[ \norm{ \tfrac{1}{\norm{\lambda}_{\ell^1}}f - \tfrac{1}{k}( \lambda_{\sigma_1}^{\circ} g_{\sigma_1} + \cdots + \lambda_{\sigma_k}^{\circ} g_{\sigma_k} ) }_{L^p(\mu)} \leq \epsilon. \]
In fact, the inequality holds with probability at least $0.99$ if the tuple $(\sigma_1, \ldots, \sigma_k)$ is picked with probability $\abs{\lambda_{\sigma_1} \cdots \lambda_{\sigma_k}}/\norm{\lambda}_{\ell^1}^k$.
\end{lemma}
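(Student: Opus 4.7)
The plan is to interpret $f/\|\lambda\|_{\ell^1}$ as an expectation and approximate it by an empirical average of $k$ independent samples, then use the Marcinkiewicz--Zygmund inequality to control the $L^p$-error.

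First I would set up the sampling distribution. Let $\sigma$ be a random index in $[N]$ with $\mathbb{P}(\sigma = j) = |\lambda_j|/\|\lambda\|_{\ell^1}$, and put $Y(x) = \lambda_\sigma^\circ\, g_\sigma(x)$. The key (and motivating) observation is that for every $x \in X$,
\[ \E_\sigma Y(x) \;=\; \sum_{j=1}^N \frac{|\lambda_j|}{\|\lambda\|_{\ell^1}}\,\lambda_j^\circ\, g_j(x) \;=\; \frac{f(x)}{\|\lambda\|_{\ell^1}}, \]
since $|\lambda_j|\lambda_j^\circ = \lambda_j$. Now take $\sigma_1,\dots,\sigma_k$ i.i.d.\ copies of $\sigma$, write $Y_i(x) = \lambda_{\sigma_i}^\circ g_{\sigma_i}(x)$, and let $F(x) = \tfrac{1}{k}\sum_{i=1}^k Y_i(x)$. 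This is precisely the random approximation appearing in the conclusion.

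Next I would estimate $\E_\sigma \bigl\| F - f/\|\lambda\|_{\ell^1} \bigr\|_{L^p(\mu)}^p$. By Fubini it equals
\[ \E_{x\sim\mu}\, \E_\sigma \Bigl| \tfrac{1}{k}\sum_{i=1}^k \bigl(Y_i(x) - \E Y_i(x)\bigr) \Bigr|^p. \]
For each fixed $x$, the summands inside are i.i.d.\ mean-zero complex random variables, so Marcinkiewicz--Zygmund bounds the inner expectation by
\[ k^{-p}(Cp)^{p/2}\,\E_\sigma\Bigl(\sum_{i=1}^k |Y_i(x)-\E Y_i(x)|^2\Bigr)^{p/2} \leq k^{-p/2}(Cp)^{p/2}\,\E_\sigma |Y_1(x)-\E Y_1(x)|^p, \]
where the second step uses convexity of $t\mapsto t^{p/2}$ (Jensen on the uniform average over $i$). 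Integrating in $x$ and applying Minkowski in $L^p(\mu\times \mathbb{P}_\sigma)$,
\[ \bigl\|Y_1 - \E Y_1\bigr\|_{L^p(\mu\times\mathbb{P}_\sigma)} \leq \|Y_1\|_p + \|f/\|\lambda\|_{\ell^1}\|_p \leq 2, \]
because $\E_{x,\sigma}|Y_1(x)|^p = \E_\sigma \|g_\sigma\|_p^p \leq 1$ and similarly $\|f/\|\lambda\|_{\ell^1}\|_p \leq \sum_j (|\lambda_j|/\|\lambda\|_{\ell^1})\|g_j\|_p \leq 1$. Combining,
\[ \E_\sigma\bigl\|F - f/\|\lambda\|_{\ell^1}\bigr\|_{L^p(\mu)}^p \leq (C'p/k)^{p/2}. \]

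Finally, choosing $k = \lceil Cp/\epsilon^2\rceil$ with $C$ large enough makes the right-hand side at most $(\epsilon/2)^p \cdot (1/100)$ (say), so Markov's inequality yields
\[ \mathbb{P}_\sigma\!\left(\bigl\|F - f/\|\lambda\|_{\ell^1}\bigr\|_{L^p(\mu)} > \epsilon\right) \leq 0.01, \]
giving both the probabilistic statement and, by deleting a null event, the existence of a suitable tuple $\sigma \in [N]^k$. The only real subtleties are the bookkeeping of constants and the use of convexity to convert the $(\sum|X_i|^2)^{p/2}$ term into a single $\E|X_1|^p$; no genuine obstacle arises.
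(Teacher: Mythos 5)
Your proposal is correct and is essentially identical to the paper's proof: the same weighted sampling distribution, the Marcinkiewicz--Zygmund inequality with a Fubini interchange, the convexity/nesting-of-norms step to reduce to $\E|Y_1-\E Y_1|^p\leq 2^p$, and Markov's inequality to conclude. The only difference is that you spell out the Jensen step that the paper compresses into ``the nesting of $L^q$ norms.''
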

\begin{remark}
In the applications we have in mind one generally has $X$ being a finite set and $\mu$ being uniform measure on some subset of $X$.
\end{remark}
\begin{proof}
Let $h$ be picked randomly from $\lambda_1^{\circ} g_1, \ldots, \lambda_N^{\circ} g_N$, with $\lambda_j^{\circ} g_j$ being picked with probability $\abs{\lambda_j}/\norm{\lambda}_{\ell^1}$. Then we have
\[ \E h = \sum_{j=1}^N \frac{\lambda_j}{\norm{\lambda}_{\ell^1}} g_j = f/\norm{\lambda}_{\ell^1} =: f_0. \]
Let $h_1,\ldots,h_k$ be iid copies of $h$. The average $k^{-1}(h_1 + \cdots + h_k)$ ought then to approximate its expectation $f_0$ provided $k$ is not too small; and indeed, by the Marcinkiewicz-Zygmund inequality coupled with two interchanges of orders of integration we have
\begin{align}
\E \int_{X} \abs{ \frac{1}{k} \sum_{j=1}^k h_j(x) - f_0(x) }^p \ud\mu(x) &\leq \frac{(Cp)^{p/2}}{k^{p/2}} \E \int_{X} \left(\frac{1}{k}\sum_{j=1}^k \abs{ h_j(x) - f_0(x) }^2 \right)^{p/2} \ud\mu(x) \nonumber\\
&\leq \frac{(Cp)^{p/2}}{k^{p/2}}, \label{eqn:M-Z}
\end{align}
the second inequality following from the nesting of $L^q$ norms and the assumption that $\norm{g_j(x)}_{L^p(\mu)} \leq 1$ for each $j$. Picking $k = \lceil{Cp/\epsilon^2}\rceil$ gives that this is at most $c\epsilon^p$ for some small constant $c$, and so
\[ \E \norm{\frac{1}{k} \sum_{j=1}^k h_j(x) - f_0(x)}_{L^p(\mu)}^p \leq c\epsilon^p. \]
Hence, by Markov's inequality, we have that
\[ \norm{\frac{1}{k} \sum_{j=1}^k h_j(x) - f_0(x)}_{L^p(\mu)} \leq \epsilon \]
holds with probability at least $1-c$, as required.
\end{proof}

\begin{remark}
In obtaining \eqref{eqn:M-Z} we bounded the integrals rather crudely using the bounds $\norm{h_j(x)}_{L^p(\mu)}, \norm{f_0}_{L^p(\mu)} \leq 1$. One can in certain situations make some additional savings here; see the remarks at the end of the next section.
\end{remark}

\begin{remark}
Since the completion of this paper we learned that abstract random sampling ideas similar to the above have been used in other contexts before. In particular, our proof of Lemma \ref{lemma:approxLp} essentially constitutes a proof of Maurey's lemma in Banach space theory as recorded in \cite{pisier:Maurey}. We thank Assaf Naor for directing us to this paper.
\end{remark}

Applied to the Fourier expansion \eqref{eqn:FI}, the lemma has the following immediate corollary.

\begin{corollary}\label{cor:FourierSampling}
Let $G$ be a finite abelian group and let $\epsilon \in (0,1)$ and $p \geq 2$ be parameters. For any non-zero function $f : G \to \C$ there are characters $\gamma_1, \ldots, \gamma_k \in \Ghat$ with $k \leq Cp/\epsilon^2$ and coefficients $c_1, \ldots, c_k \in \C$ with $\abs{c_j} = 1$ such that
\[ \norm{ f/\norm{\fhat}_{\ell^1} - \tfrac{1}{k}(c_1 \gamma_1 + \cdots + c_k \gamma_k) }_{L^p} \leq \epsilon. \]
\end{corollary}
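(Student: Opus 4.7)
The plan is simply to apply Lemma \ref{lemma:approxLp} to the Fourier inversion expansion
\[ f = \sum_{\gamma \in \Ghat} \fhat(\gamma) \gamma, \]
taking $X = G$ with the uniform probability measure $\mu_G$, and enumerating the characters $\gamma_1,\dots,\gamma_N$ of $\Ghat$. In the notation of the lemma, I set $\lambda_j = \fhat(\gamma_j)$ and $g_j = \gamma_j$.

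Before invoking the lemma I need to check the hypothesis $\norm{g_j}_{L^p(\mu_G)} \leq 1$. This is immediate: each character $\gamma_j : G \to \T$ takes values of modulus $1$, so $\norm{\gamma_j}_{L^p(\mu_G)} = 1$. Also, $\norm{\lambda}_{\ell^1} = \sum_\gamma \abs{\fhat(\gamma)} = \norm{\fhat}_{\ell^1}$, which is non-zero since $f$ is non-zero.

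Lemma \ref{lemma:approxLp} now yields a $k$-tuple $\sigma \in [N]^k$ with $k \leq Cp/\epsilon^2$ such that
\[ \norm{ f/\norm{\fhat}_{\ell^1} - \tfrac{1}{k}\bigl( \lambda_{\sigma_1}^{\circ}\gamma_{\sigma_1} + \cdots + \lambda_{\sigma_k}^{\circ}\gamma_{\sigma_k} \bigr) }_{L^p} \leq \epsilon. \]
Setting $c_j = \lambda_{\sigma_j}^{\circ}$ gives the desired conclusion; since in the probabilistic selection the index $\sigma_j$ has zero chance of being chosen when $\fhat(\gamma_{\sigma_j}) = 0$, each $c_j$ lies in $\T$ and hence $\abs{c_j} = 1$. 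There is no real obstacle here: everything is a direct specialisation of Lemma \ref{lemma:approxLp}, and the only minor point to keep in mind is the normalisation of the $L^p$ norm on $G$ that makes characters have norm exactly $1$.
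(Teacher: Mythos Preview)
Your proof is correct and is exactly the approach the paper takes: the paper simply remarks that the corollary is an immediate consequence of applying Lemma~\ref{lemma:approxLp} to the Fourier inversion expansion \eqref{eqn:FI}, and you have spelled out precisely that specialisation, including the observation that $\norm{\gamma_j}_{L^p(\mu_G)} = 1$ and that the sampled indices necessarily have $\fhat(\gamma_{\sigma_j}) \neq 0$ so that $\abs{c_j} = 1$.
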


\section{Deducing almost-periodicity and finding progressions in sumsets}\label{section:simple_periodicity}
We are now ready to prove Theorems \ref{thm:green} and \ref{thm:fourier_lp}. We restate these here so that this section provides a self-contained account of Green's bounds, modulo Lemma \ref{lemma:bohr_sets} and Corollary \ref{cor:FourierSampling}.

\begin{reptheorem}{thm:fourier_lp}
Let $p \geq 2$ and $\epsilon \in (0,1)$ be parameters. Let $G$ be a finite abelian group and suppose $f : G \to \C$ is a function. Then there is a Bohr set $T \subset G$ of rank at most $Cp/\epsilon^2$ and radius $c\epsilon$ such that, for each $t \in T$,
\[ \norm{ f(x+t) - f(x) }_{L^p(x)} \leq \epsilon \norm{\fhat}_{\ell^1}. \]
\end{reptheorem}
\begin{proof}
Applying Corollary \ref{cor:FourierSampling} gives us characters $\gamma_1, \ldots, \gamma_k$ with $k \leq Cp/\epsilon^2$ and complex numbers $c_1, \ldots, c_k$ with $\abs{c_j} = 1$ such that $g = (c_1 \gamma_1 + \cdots + c_k \gamma_k)/k$ satisfies
\[ \norm{ f/\norm{\fhat}_{\ell^1} - g }_{L^p} \leq \epsilon/3. \]
But $g$ is $L^{\infty}$-almost-periodic: for any $t \in \Bohr_G(\{ \gamma_1, \ldots, \gamma_k \}, \epsilon/3)$, the triangle inequality yields
\[ \abs{g(x+t) - g(x)} \leq \frac{1}{k}\sum_{j=1}^k \abs{ \gamma_j(t) - 1 } \leq \epsilon/3 \]
for any $x \in G$, whence the theorem follows by the $L^p$ triangle inequality.
\end{proof}

We can now deduce Theorem \ref{thm:green}.

\begin{reptheorem}{thm:green}
Suppose $A$ and $B$ are subsets of $\{1,\ldots,N\}$ with densities $\alpha$ and $\beta$. Then $A+B$ contains an arithmetic progression of length at least
\[ \exp\left( c (\alpha \beta \log N)^{1/2} - \log\log N\right). \]
\end{reptheorem}
\begin{proof}
By embedding the interval $\{1,\ldots,N\}$ in the cyclic group $\Zmod{N'}$, where $4N \leq N' \leq 8N$ is a prime, it suffices to establish the theorem for cyclic groups of prime order instead of intervals; we shall thus prove the theorem for $A, B \subset \Zmod{N}$ with densities $\alpha, \beta$.

Let $f = 1_A*1_B$ and apply Theorem \ref{thm:fourier_lp} to $f$ with parameters to be determined; this gives us a Bohr set $T$ of rank $d \leq Cp/\epsilon^2$ and radius $c\epsilon$ such that, for any $t \in T$,
\[ \norm{ 1_A*1_B(x+t) - 1_A*1_B(x) }_{L^p(x)} < \epsilon \norm{\widehat{1_A} \widehat{1_B}}_{\ell^1} \leq \epsilon (\alpha \beta)^{1/2}, \]
where we have used the convolution identity, the Cauchy-Schwarz inequality and Parseval's identity. Let $P$ be a long arithmetic progression in $T$; we shall determine how long we can pick it in a moment. We then have
\begin{align*}
\E_{x \in G} \sup_{t \in P} \abs{ 1_A*1_B(x+t) - 1_A*1_B(x) } &\leq \E_x \left( \sum_{t \in P} \abs{ 1_A*1_B(x+t) - 1_A*1_B(x) }^p \right)^{1/p} \\
&< \abs{P}^{1/p} \epsilon (\alpha \beta)^{1/2}
\end{align*}
by the nesting of $L^p$ norms. If this were less than $\alpha \beta = \E_x 1_A*1_B(x)$ then we would be done, for then there would be an element $x$ such that $1_A*1_B(x+t) > 0$ for each $t \in P$, whence $x+P \subset A+B$. Let us pick $\epsilon = (\alpha \beta)^{1/2}/e$; we shall then be done if $\abs{P} \leq e^p$. By Lemma \ref{lemma:bohr_sets} we can find an arithmetic progression $P$ in $T$ of length the integer part of
\[ c\epsilon N^{1/d} = \exp\left( c p^{-1} \alpha \beta \log N - \log(C (\alpha \beta)^{-1} ) \right), \]
so we pick $p = C \sqrt{ \alpha \beta \log N }$ in order to ensure that $\abs{P} \leq e^p$. Noting that the theorem is trivial if $\alpha \beta \leq C/\log N$, we are done. 
\end{proof}

\subsection*{Fourier space versus physical space}
We should note that one can also obtain physical-side almost-periodicity results via Lemma \ref{lemma:approxLp} by applying it to the expansion
\[ 1_A*1_B = \sum_{y \in G} \frac{1_A(y)}{\abs{G}} 1_{y+B}. \]
So applied, the lemma essentially lets one approximate $1_A*1_B$ by a multiple of $1_C*1_B$ for many small sets $C \subset A$, and from this one can deduce almost-periodicity results: see \cite{croot-sisask}. One can in fact obtain slightly stronger estimates for the expectations involved in the proof of Lemma \ref{lemma:approxLp} in this setup than those concluded above. In particular, by specializing the proof to this expansion at \eqref{eqn:M-Z}, one naturally obtains the following version of the main result of \cite{croot-sisask}; see also \cite[Lemma 3.3]{sanders:bogolyubov}.

\begin{theorem}\label{thm:plain_lp}
Let $p \geq 2$, $\epsilon \in (0,1)$ and $k \in \N$ be parameters. Let $G$ be a group and let $A$, $B$ and $S$ be finite subsets of $G$ with $\abs{S\cdot A} \leq K\abs{A}$. Then there is a set $T \subset S$ of size
\[ \abs{T} \geq \frac{0.99\abs{S}}{K^{C p k^2/\epsilon^2}} \]
such that, for each $t \in (T^{-1}T)^k$,
\[ \norm{ 1_A*1_B(tx) - 1_A*1_B(x) }_{\ell^p(x)} \leq \epsilon \abs{A} \abs{B}^{1/p}. \]
\end{theorem}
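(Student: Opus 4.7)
The strategy is to specialize Lemma \ref{lemma:approxLp} to the physical-side expansion $1_A \ast 1_B(x) = \sum_{a \in A} 1_B(x-a)$ (written additively for clarity; the non-abelian case is identical once products are oriented consistently), and then combine a sharper Marcinkiewicz--Zygmund bound with a Croot--Sisask pigeonhole step that turns the hypothesis $|S+A| \leq K|A|$ into many common shifts.

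For the sampling step, let $a_1, \ldots, a_n$ be iid uniform in $A$ and set $F_a(x) := \tfrac{|A|}{n}\sum_{j=1}^n 1_B(x - a_j)$, so $\mathbb{E}_a F_a = 1_A \ast 1_B$ pointwise. Running the proof of Lemma \ref{lemma:approxLp} but exploiting the $\{0,1\}$-valuedness of the summands to bound $\mathbb{E}[X_j^2]$ by $\mathbb{E}[1_B(x-a_j)]$ (rather than $1$), and then summing over $x$ using the identity $\sum_x 1_A \ast 1_B(x) = |A||B|$, yields
\[
\mathbb{E}_a \|F_a - 1_A \ast 1_B\|_{\ell^p}^p \leq (Cp/n)^{p/2}\, |A|^p |B|.
\]
Choose $n = Cpk^2/\epsilon^2$ and set $\epsilon_0 := \epsilon/(2k)$. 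By Markov, at least a $0.999$ fraction of the tuples $a \in A^n$ are \emph{good}, meaning $\|F_a - 1_A \ast 1_B\|_{\ell^p} \leq \epsilon_0 |A| |B|^{1/p}$.

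Now consider the coordinatewise translation map $(a, s) \mapsto a + s \in (A+S)^n$ applied to the good pairs $(a,s) \in A^n \times S$. There are at least $0.999\, |A|^n |S|$ good pairs, while the codomain has size at most $|A+S|^n \leq K^n |A|^n$, so by pigeonhole some tuple $a^* \in (A+S)^n$ is hit by at least $0.999\, |S|/K^n$ good pairs. Define
\[
T := \{\, s \in S : \text{some good } a \in A^n \text{ satisfies } a + s = a^* \,\};
\]
then $|T| \geq 0.999\, |S|/K^n$, and each $s \in T$ selects a unique good $a(s)$. For $s_1, s_2 \in T$ and $t = s_1 - s_2$ one has $a(s_1) = a(s_2) - t$ coordinatewise, yielding the key identity $F_{a(s_2)}(x + t) = F_{a(s_1)}(x)$. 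Combining this with the two $\epsilon_0$-approximation bounds via the triangle inequality and the translation invariance of $\|\cdot\|_{\ell^p}$ gives $\|1_A \ast 1_B(\cdot + t) - 1_A \ast 1_B\|_{\ell^p} \leq 2\epsilon_0 |A| |B|^{1/p}$.

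Finally, for $t \in k(T - T)$, write $t = t_1 + \cdots + t_k$ with each $t_i \in T - T$; one more triangle inequality summing the $k$ single-step bounds delivers the desired $\epsilon |A| |B|^{1/p}$. The substantive difficulty sits in the pigeonhole step: insisting that $T$ survive with density $\gtrsim K^{-n}$ forces $n$ itself to dominate the exponent, and the two separate sources of $k$ dependence---the $k$-fold iteration demands $\epsilon_0 \sim \epsilon/k$, and M--Z then requires $n \gtrsim p/\epsilon_0^2$---combine to produce the exponent $Cpk^2/\epsilon^2$ appearing in the theorem.
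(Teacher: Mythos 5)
Your proposal is correct and follows essentially the route the paper intends for Theorem \ref{thm:plain_lp}: specialize the random-sampling argument of Lemma \ref{lemma:approxLp} to the physical-space expansion of $1_A*1_B$, exploit the indicator structure at the Marcinkiewicz--Zygmund step to get the sharper bound $(Cp/n)^{p/2}\abs{A}^p\abs{B}$, and then run the Croot--Sisask pigeonhole on the map $(a,s)\mapsto s\cdot a$ followed by a $k$-fold triangle inequality. The only points to tidy are routine: the second-moment saving for $p>2$ needs the triangle inequality in $L^{p/2}$ applied to $\sum_j\abs{X_j}^2$ (not just a bound on $\E X_j^2$), and in the non-abelian case the telescoping for $t=u_1\cdots u_k$ should peel factors off so that each difference is a genuine change of variables in $x$.
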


Here we have used product notation for the group operation and unnormalized convolutions $f*g(x) = \sum_{y\in G} f(y)g(y^{-1}x)$.

By making use of more detailed still distributional information in the random sampling, one can obtain estimates of a different form for the almost-periodicity of $1_A*1_B$. This was done in \cite{croot-sisask}, and it is the use of a particular form of these estimates that leads to the improved density dependence in Theorem \ref{thm:impAPs}.

\section{A variant of Ruzsa's model lemma}\label{section:model}
We now turn to the proof of Theorem \ref{thm:impAPsDoubling}, which finds long arithmetic progressions in $A+B$ if $A$ and $B$ are sets of integers with $\abs{A+B}$ small. Before we can embark on the proof proper, we need to transfer the problem to one in a cyclic group of prime order, where we have more effective tools at our disposal. If $A$ and $B$ were assumed to be dense subsets of $\{1,\ldots,N\}$, then we could simply embed the sets in $\Zmod{N'}$ for a small prime $N'$ and work there instead of in the integers, as in the proof of Theorem \ref{thm:green}. In the small-doubling setup, however, we need to argue more subtly; the following modification of a modelling lemma of Ruzsa \cite[Theorem 2.3.5]{ruzsa:sumsets_structure} will allow us to proceed efficiently.

\begin{proposition}\label{prop-ruzsa}
Let $A$ and $B$ be finite sets of integers and suppose $k \geq 2$ is an integer. Then for any integer $N \geq \abs{kA-kA+kB-kB}$ there are subsets $A' \subset A$ and $B'\subset B$ with $\abs{A'} \geq \abs{A}/2k$, $\abs{B'} \geq \abs{B}/2k$ such that $A'+B'$ is $k$-isomorphic to a subset of $\Zmod{N}$.
\end{proposition}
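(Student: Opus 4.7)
The plan is to adapt Ruzsa's classical modeling lemma by carrying $A$ and $B$ simultaneously through the same random dilation. First choose a prime $q$ large enough that reduction mod $q$ is injective on $kA-kA+kB-kB$ (so it is a Freiman $k$-isomorphism on the mixed sums of interest), then for $\lambda\in\Zmod{q}^{\times}$ to be chosen below define
\[
\phi_\lambda:A\cup B\to\Zmod{N},\qquad \phi_\lambda(x)=\bigl((\lambda x)\bmod q\bigr)\bmod N,
\]
where the inner reduction picks the integer representative in $\{0,1,\dots,q-1\}$. The aim is to locate $A'\subset A$ and $B'\subset B$ on which $\phi_\lambda$ restricts to the desired Freiman $k$-isomorphism of $A'+B'$ into $\Zmod{N}$.

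For the size bounds I would partition $\{0,1,\dots,q-1\}$ into $2k$ consecutive intervals of length $\lfloor q/(2k)\rfloor$ and apply pigeonhole: for every $\lambda$ there exist intervals $I_A(\lambda)$ and $I_B(\lambda)$ that capture at least $\abs{A}/(2k)$ elements of $\lambda A\bmod q$ and at least $\abs{B}/(2k)$ elements of $\lambda B\bmod q$ respectively. Let $A_\lambda$ and $B_\lambda$ be these preimages; after a Freiman-preserving translation by the starting points of $I_A(\lambda)$ and $I_B(\lambda)$, one may assume the lifts of $\lambda A_\lambda$ and $\lambda B_\lambda$ each lie in $[0,q/(2k))$. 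For any $k$-tuples $(a_i)\in A_\lambda^k$ and $(b_i)\in B_\lambda^k$, the integer sum $\sum\mathrm{lift}(\lambda a_i)+\sum\mathrm{lift}(\lambda b_i)$ then lies in $[0,q)$, so it is the unique lift to $[0,q)$ of $\lambda\Sigma\bmod q$ with $\Sigma=\sum a_i+\sum b_i$; reducing mod $N$ immediately yields the forward direction of the Freiman isomorphism through $\phi_\lambda$.

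The main obstacle is the reverse direction: a collision modulo $N$ between two such lift-sums with distinct integer values $\Sigma\neq\Sigma'$ would force $\lambda(\Sigma-\Sigma')\equiv jN\pmod q$ for some nonzero integer $j$ with $\abs{j}<q/N$. For each fixed nonzero $D\in kA-kA+kB-kB$ and admissible $j$ this congruence cuts out exactly one $\lambda\in\Zmod{q}^{\times}$. A union bound over $D$ and $j$, using the hypothesis $\abs{kA-kA+kB-kB}\leq N$, controls the probability of failure by an absolute constant; shrinking the interval length by a further constant factor (harmlessly absorbed into the implicit constants in the size bounds) brings this probability strictly below one. Since the pigeonhole size bounds hold for every $\lambda$, one then picks $\lambda$ in the positive-measure set where no collision occurs, producing $A'$ and $B'$ with the claimed properties.
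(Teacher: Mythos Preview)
Your approach is the classical two-stage Ruzsa modelling argument---first embed in $\Zmod{q}$ for a large prime $q$, then dilate by a random unit $\lambda$ and reduce modulo $N$---adapted to carry $A$ and $B$ through the same dilation, and it is essentially correct. One point needs fixing, however: you propose to ``shrink the interval length by a further constant factor'' and say the loss is ``harmlessly absorbed into the implicit constants in the size bounds''. But the proposition has \emph{explicit} constants $\abs{A'}\geq\abs{A}/2k$ and $\abs{B'}\geq\abs{B}/2k$; nothing can be absorbed. Fortunately no shrinking is needed. With the $(D,j)\leftrightarrow(-D,-j)$ symmetry, the number of bad $\lambda$ is at most $(\abs{kA-kA+kB-kB}-1)\lfloor(q-1)/N\rfloor$, which under the hypothesis $N\geq\abs{kA-kA+kB-kB}$ is strictly less than $q-1$. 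There is a secondary wrinkle: with $2k$ intervals of length $\lfloor q/(2k)\rfloor$ one leaves fewer than $2k$ residues uncovered, which a priori costs a little in the pigeonhole step; but one can additionally forbid the (at most $2k(\abs{A}+\abs{B})$) values of $\lambda$ that place an element of $\lambda A$ or $\lambda B$ into this leftover, and for $q$ large the count still leaves good $\lambda$. With these details in place, your argument yields exactly the stated constants.

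The paper's proof follows a somewhat different route: rather than an auxiliary prime $q$, it uses a single real parameter $\xi\in[0,N]$ and the map $a\mapsto\lfloor\xi a\rfloor\bmod N$, partitioning $A$ and $B$ into $2k$ pieces according to which subinterval of $[0,1)$ the fractional part $\{\xi a\}$ lies in. The set of bad $\xi$ is then a union of real intervals of total length $\abs{kA-kA+kB-kB}-1<N$, so a good $\xi$ exists. The real-variable version sidesteps the rounding issues above entirely and gives the constant $2k$ with no extra bookkeeping; your discrete version is closer to Ruzsa's original and perhaps more familiar, at the cost of those minor technicalities.
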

In this context, two subsets $A$ and $A'$ of two abelian groups are said to be \emph{Freiman $k$-isomorphic} if there is a bijection $\phi : A \to A'$ such that
\[ a_1 + \cdots + a_k = a_1' + \cdots + a_k' \Longleftrightarrow \phi(a_1) + \cdots + \phi(a_k) = \phi(a_1') + \cdots + \phi(a_k') \]
whenever $a_j \in A$; such a map is called a \emph{$k$-isomorphism}. The relevance of this property here is that a $2$-isomorphism preserves the structure of a sumset, as well as the structure of an arithmetic progression.

In the following proof we use the notation $\fracpart{x} = x - \floor{x}$ to represent the fractional part of a real number $x$.
\begin{proof}
We follow Ruzsa's proof with minor modifications. Let $\xi$ be a real number in the interval $[0,N]$, to be fixed later, and define the map $\phi : \Z \to \Zmod{N}$ by
\[ \phi(a) = \floor{ \xi a } \pmod{N}. \]
We shall show that if $\xi$ is picked appropriately then one can find $A'$ and $B'$ as required such that the map $\psi(a+b) := \phi(a) + \phi(b)$ is well-defined and a $k$-isomorphism on $A'+B'$.

To this end, let us define
\[ A_j = \left\{ a \in A : \frac{j-1}{2k} \leq \fracpart{\xi a} < \frac{j}{2k} \right\} \text{ for $j = 1,\ldots,2k$,} \]
and similarly for $B_j$. We claim that if $\xi$ is chosen appropriately then any $A_r$ and $B_s$ will lead to a good $\psi$ above; the sets $A'$ and $B'$ can thus simply be taken to be the largest $A_r$ and $B_s$, which will clearly have sizes at least $\abs{A}/2k$ and $\abs{B}/2k$.

That $\psi$ is well-defined and a $k$-isomorphism on each $A_r+B_s$ will follow if we can show that there is a $\xi$ such that for arbitrary $a_1, \ldots, a_k, a'_1,\dots,a'_k \in A_r$ and $b_1, \ldots, b_k,b'_1,\dots,b'_k \in B_s$ the congruence
\begin{equation}
\begin{split}
&\floor{\xi a_1} + \cdots + \floor{\xi a_k} +\floor{\xi b_1} + \cdots + \floor{\xi b_k} \\
&\equiv 
\floor{\xi a'_1} + \cdots + \floor{\xi a'_k} +\floor{\xi b'_1} + \cdots + \floor{\xi b'_k} \pmod{N} \label{eqn:FreimanCong}
\end{split}
\end{equation}
holds if and only if the equality
\begin{align}
a_1 + \cdots + a_k + b_1 + \cdots + b_k
=a'_1 + \cdots + a'_k + b'_1 + \cdots + b'_k
\label{eqn:FreimanEquality}
\end{align}
holds. To see that such a $\xi$ exists, consider the quantity
\begin{equation}
\begin{split}
\sum_{i=1}^k \big( \floor{\xi a_i} - \floor{\xi a'_i} \big) 
&+\sum_{i=1}^k \big( \floor{\xi b_i} - \floor{\xi b'_i} \big) \\
= \xi\sum_{i=1}^k (a_i - a'_i) + \xi\sum_{i=1}^k (b_i - b'_i) 
- \sum_{i=1}^k \big(& \fracpart{\xi a_i} - \fracpart{\xi a'_i} \big)
- \sum_{i=1}^k \big( \fracpart{\xi b_i} - \fracpart{\xi b'_i} \big).
\end{split}
\label{eqn:modVsInt}
\end{equation}
Supposing first that \eqref{eqn:FreimanEquality} holds, we have that if all the $a_i,a'_i$ and $b_i,b'_i$ lie in $A_r$ and $B_s$ respectively then the last two sums in \eqref{eqn:modVsInt} have absolute value strictly less than $1/2$ each, and so the left-hand side is an integer lying strictly between $-1$ and $1$, and is hence $0$. Thus \eqref{eqn:FreimanCong} holds even as an equality of integers, not just a congruence, regardless of the choice of $\xi$.

Suppose now that \eqref{eqn:FreimanCong} holds. The left-hand side of \eqref{eqn:modVsInt} is then a multiple of $N$, and the right-hand side is of the form $\xi t + \delta$ where $t \in kA-kA+kB-kB$ and $\abs{\delta} < 1$. We want, in this notation, our choice of $\xi$ to force $t = 0$. It thus suffices to pick $\xi \in [0,N]$ such that
\[ \xi \notin \frac{1}{t}\cdot(dN + (-1,1)) \]
for any integer $d$ and non-zero integer $t \in kA-kA+kB-kB$. For a fixed $t$, this condition excludes at most $\abs{t}+1$ intervals from $[0,N]$ (corresponding to $d \in \{0,\ldots,t\}$), the lengths of which sum to $2$. The number of integers $t$ that need to be considered is $(\abs{kA-kA+kB-kB}-1)/2$, since we are omitting $0$ and since $t$ and $-t$ give rise to the same collection of excluded intervals. The sum total lengths of the excluded intervals is thus $\abs{kA-kA+kB-kB}-1$, and since $N \geq \abs{kA-kA+kB-kB}$ this means that there is a $\xi \in [0,N]$ outside these intervals, completing the proof.
\end{proof}

\begin{remark}
Clearly one can extend this statement to $m$ sets instead of two, as long as one is willing to reduce the size of the sets by a factor of $mk$ instead of $2k$.
\end{remark}

In order to apply this result we shall need an efficient bound on the size of the set $2A-2A+2B-2B$ when $A+B$ is small. The following lemma provides this.

\begin{lemma}\label{lemma:plunnecke}
Suppose $A$ and $B$ are sets of integers with $\abs{A+B} \leq K\min(\abs{A}, \abs{B})$. Then 
$\abs{2A-2A+2B-2B}\leq K^{C} \abs{B}$.
\end{lemma}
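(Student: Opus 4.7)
The plan is to apply the Plünnecke--Ruzsa inequality to the set $D := A+B$, using the identity
\[ 2A - 2A + 2B - 2B = 2D - 2D. \]
This reduces matters to bounding $\abs{2D-2D}$, for which it is enough to exhibit some set $X$ with $\abs{X+D}/\abs{X}$ bounded by a small power of $K$; Plünnecke--Ruzsa then gives $\abs{nD - mD} \leq (\abs{X+D}/\abs{X})^{n+m}\abs{X}$.

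I would take $X = A$, so that $\abs{X + D} = \abs{2A + B}$. To bound this I would use the second half of the hypothesis: $\abs{A+B} \leq K\abs{B}$ is a doubling bound with base $B$, and Plünnecke then yields $\abs{B + nA} \leq K^n\abs{B}$ for each $n$. In particular $\abs{2A + B} \leq K^2\abs{B}$, and combined with $\abs{B} \leq \abs{A+B} \leq K\abs{A}$ this gives $\abs{A + D} \leq K^3\abs{A}$.

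A second application of Plünnecke (now with base $A$ and perturbation $D$) then provides $\abs{nD - mD} \leq K^{3(n+m)}\abs{A}$, whence $\abs{2D - 2D} \leq K^{12}\abs{A} \leq K^{13}\abs{B}$, which is the desired bound with $C = 13$.

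There is no real obstacle to this approach; the main thing to keep track of is which of the two forms of the hypothesis ($\abs{A+B} \leq K\abs{A}$ or $\abs{A+B} \leq K\abs{B}$) is invoked at each step. The precise numerical value of $C$ plays no role in the sequel, so there is no reason to try to optimize it.
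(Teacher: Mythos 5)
Your reduction via the identity $2A-2A+2B-2B = 2D-2D$ with $D=A+B$ is correct and is a genuinely different route from the paper's: the paper instead intersects a translate of $-A$ with $B$ to produce a large set $D'=(x-A)\cap B$ with $\abs{D'}\geq \abs{A}/K$, and then applies the Ruzsa triangle inequality to the ten-fold sumset directly, bounding $\abs{2A-2A-D'}\leq\abs{3A-2A}$ and $\abs{D'+2B-2B}\leq\abs{3B-2B}$ by Pl\"unnecke--Ruzsa. Your final step is also fine: once a bound of the form $\abs{A+D}\leq K^{O(1)}\abs{A}$ is in hand, \cite[Corollary 6.29]{tao-vu} does give $\abs{2D-2D}\leq K^{O(1)}\abs{A}\leq K^{O(1)}\abs{B}$.

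The gap is the middle step, where you assert that $\abs{A+B}\leq K\abs{B}$ together with ``Pl\"unnecke'' yields $\abs{B+nA}\leq K^{n}\abs{B}$. That is not a form of Pl\"unnecke's inequality you can quote: from $\abs{B+A}\leq K\abs{B}$ the standard statements give $\abs{nA-mA}\leq K^{n+m}\abs{B}$ (with no copy of $B$ on the left), or $\abs{X+nA}\leq K^{n}\abs{X}$ for \emph{some} non-empty subset $X\subset B$ --- the passage to a subset is exactly the point of the theorem and cannot in general be dispensed with. The bound you need is nevertheless recoverable, with a worse exponent, by spending one Ruzsa triangle inequality and using both halves of the hypothesis: $\abs{A}\,\abs{B+2A}\leq\abs{B+A}\,\abs{2A-A}\leq K\abs{B}\cdot K^{3}\abs{B}$, so $\abs{B+2A}\leq K^{4}\abs{B}^2/\abs{A}\leq K^{5}\abs{B}\leq K^{6}\abs{A}$ since $\abs{B}\leq\abs{A+B}\leq K\abs{A}$. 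With that patch your argument closes (with a larger, and as you rightly say irrelevant, value of $C$); note that after the patch the total machinery used is essentially the same as in the paper's proof, just organised around $D=A+B$ rather than around a common subset of $B$ and a translate of $-A$.
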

\begin{proof}
This is an application of standard tools from additive combinatorics. There are many possible ways to argue; here is one. By averaging, there is an element $x$ such that the set $D = (x-A) \cap B$ has size at least $\abs{A}\abs{B}/\abs{A+B} \geq \abs{A}/K$. By the Ruzsa triangle inequality \cite[Lemma 2.6]{tao-vu} we then have
\[ \abs{2A-2A+2B-2B} \leq \frac{\abs{2A-2A-D}\abs{D+2B-2B}}{\abs{D}} \leq \frac{\abs{3A-2A}\abs{3B-2B}}{\abs{D}} \leq K^{11} \abs{B}, \]
where at the last step we used the Pl\"unnecke-Ruzsa inequality \cite[Corollary 6.29]{tao-vu} (see also \cite{petridis} for Petridis's simple proof of this inequality). \end{proof}

\section{Improving the density dependence and using small doubling}\label{section:strong_lp}
As previously mentioned, in order to obtain the improved density dependence in Theorem \ref{thm:impAPs} we shall make use of the more detailed moment estimates given in \cite{croot-sisask}. These are encoded in the following specialization of \cite[Proposition 3.3]{croot-sisask}.

\begin{theorem}\label{thm:original_lp}
Let $p \geq 2$ and $k \in \N$ be parameters. Let $G$ be a finite abelian group and let $A$, $B$ and $S$ be subsets of $G$ with $\abs{A+S} \leq K_A\abs{A}$ and $\abs{A+B} \leq K_B\abs{B}$. Suppose $0 < \epsilon \leq k/\sqrt{K_B}$. Then there is a set $T \subset S$ of size
\[ \abs{T} \geq \exp(-C k^2 p \log(2K_A)/\epsilon^2) |S| \]
such that, for each $t \in kT-kT$,
\[ \norm{ \mu_A*1_B(x+t) - \mu_A*1_B(x) }_{L^p(x)} \leq \epsilon \norm{ \mu_A*1_B }_{p/2}^{1/2}. \]
\end{theorem}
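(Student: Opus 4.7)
The plan is to follow the Croot--Sisask random sampling strategy, now adapted to deliver the $L^{p/2}$-type bound on the right-hand side. Write $c = \mu_A * 1_B$ and, for a parameter $\ell \asymp pk^2/\epsilon^2$ to be fixed, draw $y = (y_1,\dots,y_\ell)$ i.i.d.\ uniformly from $A$. The empirical average
\[ f_y(x) \;=\; \frac{1}{\ell} \sum_{i=1}^\ell 1_B(x - y_i) \]
has $\E_y f_y = c$, so a typical $y$ ought to make $f_y$ a good $L^p$-approximation to $c$.

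The first and most technical step is a moment estimate of the shape
\[ \E_y \norm{f_y - c}_{L^p}^p \;\leq\; \Bigl(\frac{Cp}{\ell}\Bigr)^{p/2} \norm{c}_{p/2}^{p/2} \;+\; (\text{lower-order terms}), \]
obtained by applying a Marcinkiewicz--Zygmund- or Rosenthal-type inequality pointwise in $x$ and then integrating. The crucial point is to exploit the pointwise variance identity $\E_y (1_B(x-y) - c(x))^2 = c(x)(1-c(x)) \leq c(x)$ instead of the trivial $|1_B - c| \leq 1$; this is precisely what turns the moment bound into something of pointwise size $c(x)^{p/2}$ and hence $\norm{c}_{p/2}^{p/2}$ after taking $\E_x$. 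The lower-order contribution from the peak term in Rosenthal's inequality scales like $(p/\ell)^{p-1}\norm{c}_1$; since $|\mathrm{supp}\, c| = |A+B| \leq K_B|B|$, H\"older's inequality gives $\norm{c}_1 \leq K_B^{p/2-1}\norm{c}_{p/2}^{p/2}$, and this is dominated by the main term exactly when $\ell \gtrsim pK_B$. The hypothesis $\epsilon \leq k/\sqrt{K_B}$ combined with the choice $\ell \asymp pk^2/\epsilon^2$ is tailored precisely to make this last inequality hold, which is why the assumption on $\epsilon$ appears. I expect this moment estimate to be the main technical obstacle.

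Once the moment bound is in hand, Markov's inequality furnishes a set $\mathcal{Y} \subset A^\ell$ of density at least $1/2$ on which $\norm{f_y - c}_{L^p} \leq (\epsilon/2k)\norm{c}_{p/2}^{1/2}$. I would then run the standard Croot--Sisask pigeonhole on the coordinate-wise translation map $(y,s) \mapsto y + s$, viewed as a map $\mathcal{Y} \times S \to (A+S)^\ell$: the source has size at least $\tfrac{1}{2}|A|^\ell|S|$ and the target at most $(K_A|A|)^\ell$, so some $y^{\ast} \in (A+S)^\ell$ has a preimage of size at least $|S|/(2K_A^\ell)$. Setting $T = \{s \in S : y^\ast - s \in \mathcal{Y}\}$ gives $|T| \geq \exp(-Ck^2 p \log(2K_A)/\epsilon^2)|S|$, as required.

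Finally, the translation identity $f_{y^\ast}(x) = f_{y^\ast - s}(x-s)$ together with $y^\ast - s \in \mathcal{Y}$ yields $\norm{c(\cdot - s) - f_{y^\ast}}_{L^p} \leq (\epsilon/2k)\norm{c}_{p/2}^{1/2}$ for every $s \in T$. Applying the triangle inequality to two such $s, s' \in T$ and using translation invariance of $L^p$ gives $\norm{c(\cdot + t) - c}_{L^p} \leq (\epsilon/k)\norm{c}_{p/2}^{1/2}$ for every $t \in T - T$. Writing a general element of $kT - kT$ as a sum of $k$ such differences and telescoping with the triangle inequality one more time then delivers the advertised bound $\epsilon \norm{c}_{p/2}^{1/2}$ for all $t \in kT - kT$.
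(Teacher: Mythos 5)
The paper does not actually prove this theorem: it is quoted verbatim as a specialization of Proposition 3.3 of the cited Croot--Sisask paper, and your argument is essentially a correct reconstruction of that proof --- the variance-refined moment bound (using $\E_y(1_B(x-y)-c(x))^2\le c(x)$ to get a pointwise $c(x)^{p/2}$ and hence $\norm{\mu_A*1_B}_{p/2}^{p/2}$ after integrating), the pigeonhole on the coordinatewise translation map into $(A+S)^\ell$, and the $k$-fold telescoping over $kT-kT$. You also correctly identify the role of the hypothesis $\epsilon\le k/\sqrt{K_B}$: together with $\ell\asymp pk^2/\epsilon^2$ it gives $\ell\gtrsim pK_B$, which (via your bound $\norm{c}_1\le K_B^{p/2-1}\norm{c}_{p/2}^{p/2}$ from Jensen on the support $A+B$) makes the lower-order Rosenthal term absorbable into the main term, any stray $C^p\,p^{O(1)}$ factors being harmless inside $(Cp/\ell)^{p/2}$.
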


(Recall that $\mu_A = 1_A/\mu_G(A)$.) The proof of this theorem in \cite{croot-sisask} was combinatorial, and various consequences of it were drawn by combinatorial means. It turns out that if we couple its conclusion with some relatively simple Fourier analysis then we can magnify its quantitative effectiveness dramatically: more specifically, following one of the key ideas of using Chang's theorem to handle a highly iterated convolution in the paper \cite{sanders:bogolyubov} of Sanders, we shall `bootstrap' the iterated sumset of almost-periods given by the previous theorem to a Bohr set of almost-periods.

\begin{theorem}\label{thm:strong_lp}
Let $G$ be a finite abelian group and let $A, B \subset G$ be non-empty subsets with $\abs{A+B} \leq K_A\abs{A}, K_B\abs{B}$. Let $p \geq 2$ and suppose $0 < \epsilon \leq 1/\sqrt{K_B}$. Then there is a Bohr set $T \subset G$ of rank $d \leq C p (\log 2K_A/\epsilon)^2 (\log 2K_A)/\epsilon^2 + C\log{\mu_G(B)^{-1}}$ and radius at least $c\epsilon/d\sqrt{K_A}$ such that
\[ \norm{ \mu_A*1_B(x+t) - \mu_A*1_B(x) }_{L^p(x)} \leq \epsilon \norm{ \mu_A*1_B }_{p/2}^{1/2} \text{ for each $t \in T$}. \]
\end{theorem}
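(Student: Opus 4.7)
The plan is to combine Theorem \ref{thm:original_lp} with a Fourier-analytic bootstrap based on Chang's theorem, along the lines Sanders \cite{sanders:bogolyubov} used to upgrade sumsets of almost-periods to Bohr sets. Write $f = \mu_A * 1_B$ throughout. The first step is to apply Theorem \ref{thm:original_lp} with $S := B$ (so that $\abs{A+S} = \abs{A+B} \le K_A\abs{A}$) and an integer $k$ to be chosen. This yields a set $T_0 \subset B$ with $\mu_G(T_0) \ge \mu_G(B)\exp(-Ck^2p\log(2K_A)/\epsilon^2)$ such that $\norm{f(\cdot+s) - f(\cdot)}_{L^p} \le \epsilon\norm{f}_{p/2}^{1/2}$ holds for every $s\in kT_0 - kT_0$. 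My job is to upgrade this ``sumset of almost-periods'' to a Bohr set of almost-periods.

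For the upgrade, introduce the probability measure $\nu := (\mu_{T_0}*\widetilde{\mu_{T_0}})^{*k}$, which is supported on $kT_0-kT_0$ and has Fourier transform $\widehat\nu(\gamma) = \abs{\widehat{\mu_{T_0}}(\gamma)}^{2k}$, hence bounded by $2^{-2k}$ off the spectrum $\operatorname{Spec}_{1/2}(\mu_{T_0}) := \{\gamma : \abs{\widehat{\mu_{T_0}}(\gamma)} \ge 1/2\}$. Chang's theorem then supplies a set $\Lambda \subset \widehat G$ of size
\[ \abs\Lambda \le C\log\mu_G(T_0)^{-1} \le C\log\mu_G(B)^{-1} + Ck^2p\log(2K_A)/\epsilon^2 \]
such that $\operatorname{Spec}_{1/2}(\mu_{T_0})$ consists of $\pm 1$-combinations of at most $\abs\Lambda$ elements of $\Lambda$. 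Taking $T := \Bohr_G(\Lambda, \rho/\abs\Lambda)$ then forces $\abs{\gamma(t) - 1} \le \rho$ for every $\gamma \in \operatorname{Spec}_{1/2}(\mu_{T_0})$ and $t \in T$.

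For $t \in T$ I would use the decomposition
\[ f(x+t) - f(x) = \big[(f - f*\nu)(x+t) - (f - f*\nu)(x)\big] + \big[f*\nu(x+t) - f*\nu(x)\big]. \]
The first bracket contributes at most $2\epsilon\norm{f}_{p/2}^{1/2}$ in $L^p$ via Minkowski's integral inequality and translation-invariance, using that $\nu$ is a probability measure supported on the almost-periods produced by Theorem \ref{thm:original_lp}. The second bracket is handled by splitting its Fourier expansion into $\gamma \in \operatorname{Spec}_{1/2}(\mu_{T_0})$ (where $\abs{\gamma(t)-1}\le\rho$) and its complement (where $\abs{\widehat\nu(\gamma)}\le 2^{-2k}$), giving a crude $L^\infty$ bound of the form $(\rho + 2 \cdot 2^{-2k})\norm{\widehat f}_{\ell^1}$. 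By Cauchy-Schwarz, Parseval, and the doubling bound $\mu_G(A)\ge\mu_G(A+B)/K_A\ge\mu_G(B)/K_A$, one has $\norm{\widehat f}_{\ell^1} \le (\mu_G(B)/\mu_G(A))^{1/2} \le \sqrt{K_A}$. Choosing $\rho\asymp \epsilon/\sqrt{K_A}$ and $k \asymp \log(2K_A/\epsilon)$ makes both $\rho\sqrt{K_A}$ and $2^{-2k}\sqrt{K_A}$ at most $c\epsilon$, and the rank of $T$ comes out to $\abs\Lambda \le Cp(\log 2K_A/\epsilon)^2\log(2K_A)/\epsilon^2 + C\log\mu_G(B)^{-1}$ while the radius $\rho/\abs\Lambda$ is $c\epsilon/(d\sqrt{K_A})$, matching the statement.

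The main obstacle I anticipate is ensuring that the second bracket is really bounded by $\epsilon\norm{f}_{p/2}^{1/2}$ rather than by $\epsilon$ times an absolute constant, since the $L^\infty$ bound above scales only with $\sqrt{K_A}$ and not with $\norm{f}_{p/2}^{1/2}$. To recover the correct $p$-dependence one likely needs to interpolate between the $L^\infty$ bound and an $L^2$ bound obtained by Parseval on $f*\nu$ that does scale correctly via $\norm{f}_2^2 \le \norm{\mu_A}_1^2\norm{1_B}_2^2 = \mu_G(B) \le \norm{f}_{p/2}$; balancing this interpolation against the parameter choices above, while keeping the rank of $T$ within the stated bound, is the delicate point of the argument.
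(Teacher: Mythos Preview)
Your overall architecture matches the paper's exactly: apply Theorem~\ref{thm:original_lp} with $S=B$, smooth $f=\mu_A*1_B$ by the resulting averaging measure, control $f-f*\nu$ in $L^p$ via Minkowski, and then handle the smoothed difference $f*\nu(\cdot+t)-f*\nu$ via Fourier inversion, splitting over the large spectrum of $\mu_{T_0}$ and invoking Chang's theorem to pass to a low-rank Bohr set. The paper uses $\mu_X^{(k)}$ in place of your $(\mu_{T_0}*\widetilde{\mu_{T_0}})^{*k}$, but this is immaterial.

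The obstacle you flag at the end is genuine, but the fix is much simpler than interpolation and is what the paper actually does. The point is that $f(x+t)-f(x)$ is supported on $S:=(A+B)\cup(A+B-t)$, a set of measure at most $2\mu_G(A+B)$. Hence in the triangle-inequality decomposition you may first multiply through by $1_S$ (the first bracket's $L^p$ norm only decreases), and then for the second bracket use
\[
\norm{\big(f*\nu(\cdot+t)-f*\nu\big)1_S}_{L^p}\le (2\mu_G(A+B))^{1/p}\,\norm{f*\nu(\cdot+t)-f*\nu}_{L^\infty}.
\]
Your $L^\infty$ bound $(\rho+2\cdot 2^{-2k})\norm{\widehat f}_{\ell^1}\le(\rho+2\cdot 2^{-2k})(\mu_G(B)/\mu_G(A))^{1/2}$ now combines with the H\"older-type lower bound
\[
\norm{f}_{p/2}\ \ge\ \norm{f}_1\big/\mu_G(A+B)^{1-2/p}\ =\ \mu_G(B)\big/\mu_G(A+B)^{1-2/p}
\]
to give precisely the right scaling: the requirement reduces to
\[
(\rho+2\cdot 2^{-2k})\ \le\ c\,\epsilon\,(\mu_G(A)/\mu_G(A+B))^{1/2}\ =\ c\,\epsilon/\sqrt{K_A},
\]
which is exactly your parameter choice $\rho\asymp\epsilon/\sqrt{K_A}$, $k\asymp\log(2K_A/\epsilon)$. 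No $L^2$--$L^\infty$ interpolation is needed; the factor $\mu_G(A+B)^{1/p}$ coming from the support restriction is the missing ingredient that converts your absolute $L^\infty$ bound into one comparable to $\epsilon\norm{f}_{p/2}^{1/2}$.
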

\begin{proof}
Apply the previous theorem with $S = B$ and parameter $k$ to be determined to get a set $X$ with
\[ \tau := \abs{X}/\abs{G} \geq \exp\left(-C k^2 p (\log 2K_A)/\epsilon^2 - \log{ \mu_G(B)^{-1}} \right) \]
such that 
\[ \norm{ \mu_A*1_B(x+t) - \mu_A*1_B(x) }_{L^p(x)} \leq \tfrac{1}{3}\epsilon \norm{ \mu_A*1_B }_{p/2}^{1/2} \text{ for each $t \in k X$}. \]
By the triangle inequality we thus have
\[ \norm{ \mu_A*1_B*\mu_X^{(k)} - \mu_A*1_B }_p \leq \tfrac{1}{3}\epsilon \norm{ \mu_A*1_B }_{p/2}^{1/2}, \]
where $\mu_X^{(k)} = \mu_X*\mu_X*\cdots*\mu_X$ denotes the $k$-fold convolution of $\mu_X$ with itself.

Now let $t \in G$ be arbitrary. Another application of the triangle inequality yields
\begin{equation}
\begin{split}
\norm{ \mu_A*1_B(x+t) &- \mu_A*1_B(x) }_{L^p(x)} \leq \tfrac{2}{3}\epsilon \norm{ \mu_A*1_B }_{p/2}^{1/2} \\
&+ \norm{ \mu_A*1_B*\mu_X^{(k)}(x+t)1_S(x) - \mu_A*1_B*\mu_X^{(k)}(x)1_S(x)}_{L^p(x)},
\end{split}\label{eqn:lp_bound}
\end{equation}
where $S = (A+B) \cup (A+B-t)$ contains the support of $\mu_A*1_B(x+t) - \mu_A*1_B(x)$. We shall bound the last term here by restricting $t$ to an appropriate Bohr set. Indeed, bounding the term by its maximum on $S$ and then using Fourier inversion we have that it is at most
\begin{align*}
2^{1/p} &\mu_G(A+B)^{1/p} \norm{ \mu_A*1_B*\mu_X^{(k)}(x+t) - \mu_A*1_B*\mu_X^{(k)}(x)}_{L^\infty(x)} \\
&\leq 2^{1/p} \mu_G(A+B)^{1/p} \sum_{\gamma \in \Ghat} \abs{\widehat{\mu_A}(\gamma)} \abs{\widehat{1_B}(\gamma)} \abs{\widehat{\mu_X}(\gamma)}^k \abs{\gamma(t) - 1}.  
\end{align*}
Now let $\Gamma = \{ \gamma \in \Ghat : \abs{\widehat{\mu_X}(\gamma)} \geq 1/e \}$ and set $\delta = \epsilon/5\sqrt{K_A}$ and $k = \ceiling{ \log 2/\delta }$. Then, for any $t \in \Bohr_G(\Gamma, \delta)$ we have
\begin{align*}
\sum_{\gamma \in \Ghat} \abs{\widehat{\mu_A}(\gamma)} \abs{\widehat{1_B}(\gamma)} \abs{\widehat{\mu_X}(\gamma)}^k \abs{\gamma(t) - 1} 
&\leq \delta \sum_{\gamma \in \Gamma} \abs{\widehat{\mu_A}(\gamma)} \abs{\widehat{1_B}(\gamma)} + \frac{2}{e^k} \sum_{\gamma \notin \Gamma} \abs{\widehat{\mu_A}(\gamma)} \abs{\widehat{1_B}(\gamma)} \\
&\leq \delta \mu_G(B)^{1/2}\mu_G(A)^{-1/2}.
\end{align*}
To be of use in \eqref{eqn:lp_bound} we thus want $2^{1/p} \mu_G(A+B)^{1/p} \delta \mu_G(B)^{1/2}\mu_G(A)^{-1/2}$ to be at most $\tfrac{1}{3}\epsilon \norm{ \mu_A*1_B }_{p/2}^{1/2}$. A quick calculation using H\"older's inequality and the relationship $\norm{\mu_A*1_B}_1 = \mu_G(B)$ reveals that our choice of $\delta$ ensures this.
Hence we have the conclusion we want for any $t \in \Bohr_G(\Gamma, \delta)$. To obtain the low rank conclusion, we apply Chang's theorem \cite{chang} (see also \cite[Lemma 4.36]{tao-vu}): this says that $\Bohr_G(\Gamma, \delta)$, being a Bohr set associated to a set of large Fourier coefficients, contains a low-rank Bohr set $\Bohr_G(\Lambda, \delta/d)$ where
\[ d := \abs{\Lambda} \leq C \log{1/\tau} \leq C p (\log 2K_A/\epsilon)^2 (\log 2K_A)/\epsilon^2 + C\mu_G(B)^{-1}, \]
which completes the proof.
\end{proof}

\begin{proof}[Proof of Theorem \ref{thm:impAPsDoubling}]
We first need to embed the sets $A$ and $B$ into a finite abelian group so that we can effectively use the properties of Bohr sets. As mentioned in Section \ref{section:model}, this is simple in the setup of Theorem \ref{thm:impAPs}, where $A$ and $B$ are dense subsets of $\{1,\ldots,N\}$: we may simply embed the sets in $\Zmod{N'}$ where $N'$ is a prime between $4N$ and $8N$. For the small doubling case we use Proposition \ref{prop-ruzsa}. Specifically, let us assume without loss of generality that $K_B \leq K_A$. Applying Proposition \ref{prop-ruzsa} to $A$ and $B$ with $k=2$ and $N$ a prime less than $CK_A^C \abs{B}$, as allowed by Lemma \ref{lemma:plunnecke}, we obtain sets $A' \subset A$, $B' \subset B$ with $\abs{A'} \geq \abs{A}/4,$ $\abs{B'} \geq \abs{B}/4$ and a Freiman $2$-isomorphism $\phi$ from $A'+B'$ to a subset of $\Zmod{N}$. We shall find our desired arithmetic progression in $A'+B'$. We may assume, by translating if necessary, that $A'$ and $B'$ both contain $0$ and that $\phi$ takes $0$ to $0$. Hence we have $\phi(A'+B') = \phi(A') + \phi(B')$ and so $\abs{\phi(A') + \phi(B')} \leq 4K_A\abs{\phi(A')}, 4K_B\abs{\phi(B')}$. Since the property of containing an arithmetic progression of a given size is preserved under $2$-isomorphism, it suffices (after adjusting the constants) to establish the theorem for $\phi(A')$ and $\phi(B')$, which we now relabel as $A$ and $B$.

The argument is now similar to that in the proof of Theorem \ref{thm:green} given in Section \ref{section:prob_arg}, the main difference being that we take into account the higher-order energy $\norm{\mu_A*1_B}_{p/2}$ present in the conclusion of Theorem \ref{thm:strong_lp}. Let us apply Theorem \ref{thm:strong_lp} with $\epsilon = 1/e\sqrt{K_B}$ and a parameter $p$ to be specified later; since $B$ has density at least $cK_A^{-C}$ in $\Zmod{N}$ this gives us a Bohr set $T$ of rank $d \leq Cp K_B(\log 2K_A)^3$ and radius at least $c/K_A d$ such that 
\[ \norm{ \mu_A*1_B(x+t) - \mu_A*1_B(x) }_{L^p(x)} < \epsilon \norm{ \mu_A*1_B }_{p/2}^{1/2} \text{ for each $t \in T$}. \]
Let $P$ be an arithmetic progression in $T$. Assume for a contradiction that $A+B$ does not contain a translate of $P$, so that for each $x \in \Zmod{N}$ there is some $t \in P$ for which $\mu_A*1_B(x+t) = 0$. Then
\begin{align*}
\abs{P} \epsilon^p \norm{ \mu_A*1_B }_{p/2}^{p/2} &> \sum_{t \in P} \norm{\mu_A*1_B(x+t) - \mu_A*1_B(x)}_{L^p(x)}^p \\
&\geq \norm{\mu_A*1_B}_p^p \geq \norm{\mu_A*1_B}_{p/2}^p/\mu_G(A+B),
\end{align*}
the last inequality being an instance of the Cauchy-Schwarz inequality.
We shall thus have a contradiction if $\abs{P}$ and $p$ are picked so that
\[ \abs{P} \leq e^p K_B^{p/2} \norm{\mu_A*1_B}_{p/2}^{p/2}/\mu_G(A+B). \] 
Since $\norm{\mu_A*1_B}_{p/2} \geq \norm{\mu_A*1_B}_1/\mu_G(A+B)^{1-2/p}$ by H\"older's inequality, it suffices to pick $\abs{P} \leq e^p$ to obtain the contradiction. Now, by Lemma \ref{lemma:bohr_sets} we can find an arithmetic progression $P$ in $T$ of size the integer part of
\[ \exp\left( c \frac{\log{N}}{p K_B (\log{2K_A})^3} - C\log(p K_A) \right); \]
picking $p = C\sqrt{\log{N} / K_B (\log{2K_A})^3}$ then yields the conclusion, since $\abs{A+B} \leq N \leq CK_A^C \abs{A}$.
\end{proof}

\section{Further remarks}\label{section:remarks}

\subsection*{Vector spaces over finite fields}
A very useful philosophy in additive combinatorics, particularly expounded and well-exposited by Green \cite{green:finite_fields}, is that it can often be simpler to study a given additive problem perhaps not in its original setting but rather in a particular family of groups, namely the vector spaces over finite fields $\F_q$. One of the simplifications that this setting allows over that of general abelian groups is the replacement of Bohr sets with subspaces, which are exact rather than approximate annihilators of sets of characters, and this makes many arguments simpler; this is the case in \cite{green:longAPs} and \cite{sanders:longAPs}, for example. Breaking the trend, the methods of this paper hardly change when looked at in vector spaces. The only significant change occurs in the proofs of Theorems \ref{thm:green} and \ref{thm:impAPs}: instead of taking $P$ to be an arithmetic progression in the Bohr set $T$, one can let $P$ be a \emph{subspace} of the \emph{subspace} $T$. Making this change gives the following analogue of Theorem \ref{thm:green}, a result originally established (in the $q=2$ case) by Green \cite{green:restrictionKakeya} and then by Sanders \cite{sanders:longAPs}.

\begin{theorem}
Suppose $A$ and $B$ are subsets of $\F_q^n$ of densities $\alpha$ and $\beta$. Then $A+B$ contains a translate of a subspace of dimension at least $c \alpha \beta n / \log q$.
\end{theorem}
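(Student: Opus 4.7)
The plan is to run the proof of Theorem~\ref{thm:green} almost verbatim, with the Bohr set $T$ produced by Theorem~\ref{thm:fourier_lp} replaced by an exact $\F_q$-subspace and the arithmetic progression inside $T$ replaced by an $\F_q$-subspace of this annihilator. Set $G = \F_q^n$, $f = 1_A * 1_B$, $\epsilon = (\alpha\beta)^{1/2}/e$, and apply Theorem~\ref{thm:fourier_lp} with these values and a parameter $p \geq 2$ to be chosen. Inspecting its proof, the Bohr set produced has the form $T = \Bohr_G(\{\gamma_{a_1},\ldots,\gamma_{a_k}\}, \epsilon/3)$ for certain characters $\gamma_{a_j}(x) = \chi(a_j \cdot x)$ indexed by vectors $a_j \in \F_q^n$, with $k \leq Cp/\epsilon^2 = Cp/(\alpha\beta)$. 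Cauchy--Schwarz and Parseval give $\norm{\fhat}_{\ell^1} \leq (\alpha\beta)^{1/2}$ just as in the proof of Theorem~\ref{thm:green}, so
\[ \norm{f(x+t) - f(x)}_{L^p(x)} \leq \epsilon (\alpha\beta)^{1/2} \quad \text{for every } t \in T. \]

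The next step is to pass from $T$ to a genuine $\F_q$-subspace. Let $W \subset \F_q^n$ be the $\F_q$-linear span of $a_1,\ldots,a_k$, of $\F_q$-dimension at most $k$, and let $V = W^{\perp} = \{x \in \F_q^n : a_j \cdot x = 0 \text{ for all } j\}$ be its annihilator under the standard bilinear form. This is an $\F_q$-subspace of $\F_q^n$ of codimension at most $k$, and for any $x \in V$ we have $\gamma_{a_j}(x) = \chi(0) = 1$ \emph{exactly}; so $V \subset T$ and the almost-periodicity estimate above holds on the whole of $V$. This is the one place the argument genuinely diverges from the cyclic-group case, and is the main point to be careful about: if $q = p_0^r$ with $r > 1$, the individual character-level kernels are only $\F_{p_0}$-subspaces of $\F_q^n$, and one must drop to the underlying $\F_q$-linear forms $a_j \cdot x$ to secure an $\F_q$-subspace of the correct codimension.

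The endgame then mimics the conclusion of Theorem~\ref{thm:green}. Choose any $\F_q$-subspace $P \subset V$ with $\dim_{\F_q} P = \min(\dim_{\F_q} V,\,\lfloor p/\log q \rfloor)$, so that $\abs{P} = q^{\dim P} \leq e^p$. Nesting of $L^p$ norms yields
\[ \E_x \sup_{t \in P} \abs{f(x+t) - f(x)} \leq \abs{P}^{1/p}\, \epsilon (\alpha\beta)^{1/2} \leq \alpha\beta = \E_x f(x), \]
so there must exist $x$ with $f(x+t) > 0$ for all $t \in P$, whence $x + P \subset A+B$. Finally, balancing the two constraints $\dim P \leq p/\log q$ and $\dim P \leq n - k = n - Cp/(\alpha\beta)$ by choosing $p = c\alpha\beta n$ for an appropriately small absolute constant $c$ yields $\dim P \geq c\alpha\beta n/\log q$, as required; for $n$ too small to meet $p \geq 2$ the stated bound is trivial.
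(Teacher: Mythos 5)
Your proposal is correct and is essentially the argument the paper intends: it runs the proof of Theorem \ref{thm:green} with the Bohr set replaced by the exact annihilator (an $\F_q$-subspace of codimension at most $k \leq Cp/\alpha\beta$) and the progression $P$ replaced by a subspace of it with $\abs{P} \leq e^p$, then optimizes $p = c\alpha\beta n$. Your extra care in passing from the character kernels to the $\F_q$-linear forms $a_j \cdot x$ (so as to get a genuine $\F_q$-subspace when $q$ is a prime power) is a detail the paper glosses over, but it is handled correctly.
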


The argument used in the proof of Theorem \ref{thm:impAPs} similarly yields the following version with improved density dependence.

\begin{theorem}\label{thm:improved_density_Fpn}
Suppose $A$ and $B$ are subsets of $\F_q^n$ of densities $\alpha$ and $\beta$. Then $A+B$ contains a translate of a subspace of dimension at least $c \alpha (\log 2\beta^{-1})^{-3} n / \log q$.
\end{theorem}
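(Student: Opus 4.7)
The plan is to adapt the proof of Theorem \ref{thm:impAPsDoubling} to $\F_q^n$, where two simplifications occur: no modelling step is needed, and both Bohr sets and arithmetic progressions can be replaced by subspaces, using the fact that in $\F_q^n$ the exact annihilator $\Lambda^{\perp}$ of a finite set of characters $\Lambda$ is always contained in every Bohr set $\mathrm{Bohr}(\Lambda, \delta)$.

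Without loss of generality, assume $\alpha \geq \beta$, and apply Theorem \ref{thm:strong_lp} to the convolution $\mu_B * 1_A$, so that the theorem's labels become $K_A = \beta^{-1}$ and $K_B = \alpha^{-1}$ (both bounded via $|A+B| \leq q^n$). Choose $\epsilon = 1/(e\sqrt{K_B}) = \sqrt{\alpha}/e$ and let $p \geq 2$ be a parameter. The assumption $\alpha \geq \beta$ ensures $\log(2K_A/\epsilon) \lesssim \log 2\beta^{-1}$, so the output Bohr set contains a subspace of codimension at most $d \lesssim p\alpha^{-1}(\log 2\beta^{-1})^3 + \log 2\alpha^{-1}$ on which the almost-periodicity estimate $\|\mu_B * 1_A(\cdot + t) - \mu_B * 1_A\|_p \leq \epsilon \|\mu_B * 1_A\|_{p/2}^{1/2}$ holds.

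Inside this subspace we pick a subspace $P$ and run the $L^p$ pigeonhole argument from the proof of Theorem \ref{thm:impAPsDoubling}: if $A+B$ contains no translate of $P$, then for every $x \in \F_q^n$ there is some $t \in P$ with $\mu_B * 1_A(x+t) = 0$, and summing, averaging, and applying Cauchy-Schwarz together with H\"older just as there forces $|P| \geq e^p$. Thus $|P| = q^{\dim P} < e^p$, i.e.\ $\dim P < p/\log q$, yields the desired translate of $P$ inside $A+B$. Balancing this against $\dim P \leq n - d$ by taking $p \asymp n\alpha/(\log 2\beta^{-1})^3$ produces a subspace of dimension at least $c n\alpha/((\log 2\beta^{-1})^3 \log q)$, as claimed. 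The main delicate point is the WLOG convention $\alpha \geq \beta$: without it the factor $\log(K_A/\epsilon)$ would contribute a spurious $\log\alpha^{-1}$ to the rank bound and degrade the final answer.
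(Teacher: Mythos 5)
Your proposal is correct and follows essentially the route the paper intends for this result: apply Theorem \ref{thm:strong_lp} directly in $\F_q^n$ (no modelling needed), replace the Bohr set by the exact annihilator subspace, and rerun the $L^p$ pigeonhole argument of Theorem \ref{thm:impAPsDoubling} with $P$ a subspace, the constraint $q^{\dim P}\leq e^p$ replacing $\abs{P}\leq e^p$ for a progression. Your care with the labelling (choosing the roles so that the condition $K_B\leq K_A$ of the paper's argument becomes $\alpha\geq\beta$, keeping $\log(2K_A/\epsilon)\lesssim\log 2\beta^{-1}$) is exactly the point the paper handles via its ``without loss of generality $K_B\leq K_A$'' step.
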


In the case when $B=-A$ it is known from further work of Sanders \cite{sanders:GreenHalf} that one can remove the logarithmic factor. It has furthermore been suggested by Green \cite{green:finite_fields} that the true nature of the bound should be one of an upper bound on codimension rather than a lower bound on dimension, so Theorem \ref{thm:improved_density_Fpn} might be quite far from the truth.

One need not specialize $P$ to be a subspace in the proofs, of course; the more general result one obtains is then the following.

\begin{theorem}
Suppose $A$ and $B$ are subsets of $\F_q^n$ of densities $\alpha$ and $\beta$. Then, for any $d \in \N$, there is a subspace $V \leq \F_q^n$ of codimension $d$ such that $A+B$ contains a translate of any subset of $V$ of size at most $\exp\left( c \alpha (\log 2\beta^{-1})^{-3} d \right)$.
\end{theorem}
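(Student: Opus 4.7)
The plan is to mimic the proof of Theorem \ref{thm:impAPsDoubling} in the $\F_q^n$ setting, observing that its final contradiction argument uses no structure on $P$ beyond containment in the subspace of almost-periods and the size bound $|P| \leq e^p$. In the integer/cyclic setting one had to locate a long arithmetic progression inside the Bohr set $T$ via Lemma \ref{lemma:bohr_sets}; here the almost-period set is already a subspace, and one may simply take $P$ to be an arbitrary subset of it of the right size.

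The one ingredient needed is the $\F_q^n$ analogue of Theorem \ref{thm:strong_lp}: given $A, B \subset \F_q^n$ with $\abs{A+B} \leq K_A\abs{A}, K_B\abs{B}$, $p \geq 2$, and $0 < \epsilon \leq 1/\sqrt{K_B}$, there is a subspace $V \leq \F_q^n$ of codimension at most $Cp(\log 2K_A/\epsilon)^2(\log 2K_A)/\epsilon^2 + C\log\mu_{\F_q^n}(B)^{-1}$ satisfying $\norm{\mu_A*1_B(x+t) - \mu_A*1_B(x)}_{L^p(x)} \leq \epsilon\norm{\mu_A*1_B}_{p/2}^{1/2}$ for all $t \in V$. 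The proof is essentially identical to that of Theorem \ref{thm:strong_lp}, with Bohr sets replaced throughout by exact-annihilator subspaces: Theorem \ref{thm:original_lp} already holds over any finite abelian group, and Chang's theorem in $\F_q^n$ returns an exact annihilator subspace of codimension $\leq C\log(1/\tau)$ directly, with no radius parameter involved. Verifying this carry-over is the only non-trivial point in the plan, and is essentially bookkeeping.

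Given $V$, the rest is a direct port. After relabelling so that $\alpha$ plays the role of $1/K_B$ (placing $\alpha$ outside the logarithm in the final bound), pick $\epsilon = 1/(e\sqrt{K_B}) = \sqrt\alpha/e$ and $p = c\alpha d/(\log 2\beta^{-1})^3$; then $V$ has codimension at most $d$ (after absorbing the lower-order $\log\mu_{\F_q^n}(B)^{-1}$ term, harmless once $d$ is large enough for the statement to be non-trivial, and passing to an exact-codimension-$d$ subspace if needed). If some $P \subset V$ with $\abs{P} \leq e^p$ admitted no translate inside $A+B$, then for each $x \in \F_q^n$ there would be some $t \in P$ with $\mu_A*1_B(x+t) = 0$, giving
\[
\abs{\mu_A*1_B(x)}^p \;\leq\; \sum_{t \in P}\abs{\mu_A*1_B(x+t) - \mu_A*1_B(x)}^p.
\]
Averaging over $x$, invoking the almost-periodicity bound, and applying the same Hölder chain as in the proof of Theorem \ref{thm:impAPsDoubling} (namely $\norm{\mu_A*1_B}_p \geq \norm{\mu_A*1_B}_{p/2}/\mu_{\F_q^n}(A+B)^{1/p}$ and $\norm{\mu_A*1_B}_{p/2} \geq \mu_{\F_q^n}(B)/\mu_{\F_q^n}(A+B)^{1-2/p}$) collapses this inequality to $\abs{P} \geq e^p$, a contradiction.
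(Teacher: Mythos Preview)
Your proposal is correct and follows exactly the route the paper indicates: the paper gives no separate proof for this statement, remarking only that the argument behind Theorem~\ref{thm:impAPs}/\ref{thm:impAPsDoubling} carries over to $\F_q^n$ with Bohr sets replaced by subspaces, and that one need not specialise $P$ to be a subspace. You have filled in precisely these details, including the correct observation that the contradiction step uses nothing about $P$ beyond $P\subset V$ and $|P|\le e^p$, and the (necessary) relabelling that makes the $\alpha$-dense set play the role of $B$ in Theorem~\ref{thm:strong_lp} so that $\alpha$ sits outside the logarithm; the residual $\log\mu_G(B)^{-1}$ and $\log(1/\epsilon)$ contributions are indeed lower-order once $d$ is in the non-trivial range.
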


\subsection*{Remarks on the bootstrapping procedure}

Exactly the same smoothing-by-convolution argument that deduced Theorem \ref{thm:strong_lp} from Theorem \ref{thm:original_lp} can be used to deduce the following result from Theorem \ref{thm:plain_lp}.

\begin{theorem}\label{thm:plain_lp_bohr}
Let $p \geq 2$ and $\epsilon \in (0,1)$. Let $G$ be a finite abelian group and let $A$, $B$ and $S$ be finite subsets of $G$ with $\abs{A+S} \leq K_1\abs{A}$ and $\abs{A+B} \leq K_2 \abs{B}$. Then there is a Bohr set $T$ of rank at most 
$$d=Cp(\log(1/\delta))^2\epsilon^{-2}\log (2K_1) + C\log(\mu_G(S)^{-1})$$
and radius at least $\delta/d$, where
$$\delta=c\epsilon\sqrt{\mu_G(A)/\mu_G(B)}K_2^{-1/p},$$
such that, for each $t \in T$,
\[ \norm{ \mu_A*1_B(x+t) - \mu_A*1_B(x) }_{L^p(x)} \leq \epsilon \mu_G(B)^{1/p}. \]
\end{theorem}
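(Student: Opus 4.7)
The proof would follow the same bootstrapping pattern as that of Theorem \ref{thm:strong_lp}, simply replacing the input Theorem \ref{thm:original_lp} with Theorem \ref{thm:plain_lp}. First I would apply Theorem \ref{thm:plain_lp} to $A,B,S$ with $\epsilon/3$ in place of $\epsilon$ and an integer $k$ to be chosen later, obtaining (after translating so that $X$ contains $0$, and symmetrising if need be) a set $X \subset S$ of density
\[ \tau := \mu_G(X) \geq 0.99\, \mu_G(S)\, K_1^{-Cpk^2/\epsilon^2} \]
such that, for every $t \in kX - kX$,
\[ \norm{\mu_A*1_B(\cdot+t) - \mu_A*1_B}_{L^p} \leq (\epsilon/3)\, \mu_G(B)^{1/p} \]
(after converting the $\ell^p$ bound of Theorem \ref{thm:plain_lp} into the normalised $L^p$ bound used here). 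Integrating this against $\mu_X^{(k)}$, which is supported on $kX \subset kX - kX$, yields by the triangle inequality the smoothing estimate $\norm{\mu_A*1_B*\mu_X^{(k)} - \mu_A*1_B}_{L^p} \leq (\epsilon/3)\mu_G(B)^{1/p}$.

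Next, for arbitrary $t \in G$ I would insert $\pm\, \mu_A*1_B*\mu_X^{(k)}(\cdot+t)$ and $\pm\, \mu_A*1_B*\mu_X^{(k)}$, and then use the fact that the difference $\mu_A*1_B(\cdot+t) - \mu_A*1_B$ is supported on $R := (A+B) \cup (A+B-t)$, with $\mu_G(R) \leq 2K_2\mu_G(B)$. This yields
\[ \norm{\mu_A*1_B(\cdot+t) - \mu_A*1_B}_{L^p} \leq \tfrac{2\epsilon}{3}\, \mu_G(B)^{1/p} + (2K_2 \mu_G(B))^{1/p} E(t), \]
where $E(t) := \norm{\mu_A*1_B*\mu_X^{(k)}(\cdot+t) - \mu_A*1_B*\mu_X^{(k)}}_{L^\infty}$. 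Fourier inversion, splitting the Fourier sum at the threshold $|\widehat{\mu_X}(\gamma)|=1/e$, and Cauchy--Schwarz together with Parseval (to control $\sum_\gamma |\widehat{\mu_A}(\gamma)||\widehat{1_B}(\gamma)| \leq \sqrt{\mu_G(B)/\mu_G(A)}$) bound
\[ E(t) \leq (\delta + 2e^{-k})\sqrt{\mu_G(B)/\mu_G(A)} \]
for every $t$ in $\Bohr_G(\Gamma, \delta)$, where $\Gamma := \{\gamma : |\widehat{\mu_X}(\gamma)| \geq 1/e\}$.

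Taking $\delta$ as in the statement and $k = \lceil \log(2/\delta) \rceil$ then makes $(2K_2\mu_G(B))^{1/p} E(t) \leq (\epsilon/3) \mu_G(B)^{1/p}$, producing the required $L^p$ bound on all of $\Bohr_G(\Gamma, \delta)$. To extract a low-rank Bohr set I would then apply Chang's theorem to the $1/e$-spectrum $\Gamma$ of the density-$\tau$ set $X$: this produces $\Lambda \subset \Ghat$ with $d := |\Lambda| \leq C\log(1/\tau)$ and $\Bohr_G(\Gamma, \delta) \supset \Bohr_G(\Lambda, \delta/d)$. Substituting the bound on $\log(1/\tau)$ from the first step, and using $k^2 \leq C(\log(1/\delta))^2$, gives precisely the advertised rank.

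The only delicate point is the calibration of $\delta$ and $k$: one must choose them so that the $L^\infty$ Fourier contribution balances exactly against the $(\epsilon/3)\mu_G(B)^{1/p}$ coming from the $L^p$ smoothing, and so that the ensuing rank bound collapses into the clean form stated. Once this bookkeeping is done, the proof is a direct transcription of the argument used for Theorem \ref{thm:strong_lp}.
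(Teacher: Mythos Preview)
Your proposal is correct and follows exactly the route the paper indicates: the paper does not give a separate proof of this theorem but states that ``exactly the same smoothing-by-convolution argument that deduced Theorem~\ref{thm:strong_lp} from Theorem~\ref{thm:original_lp}'' works with Theorem~\ref{thm:plain_lp} as input, and your write-up reproduces that argument step for step, including the support bound via $\abs{A+B}\leq K_2\abs{B}$, the Fourier-side splitting at the $1/e$-spectrum of $X$, the Cauchy--Schwarz/Parseval bound $\sum_\gamma |\widehat{\mu_A}||\widehat{1_B}|\leq \sqrt{\mu_G(B)/\mu_G(A)}$, and the application of Chang's theorem to obtain the stated rank.
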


Although considerably more technical looking, this theorem has the advantage over Theorem \ref{thm:plain_lp} that the set of almost-periods it gives is more easily seen to be structured than the iterated sumset given by Theorem \ref{thm:plain_lp}. 
Applying this with $S=A$, $B=A-A$, $\epsilon = 1/2$ and $p=C\log(2K)$, where $K$ is the doubling constant for $A$, it is a quick deduction to recover Theorems 11.1 and 11.3 of Sanders \cite{sanders:bogolyubov}, namely that $2A-2A$ contains a low-rank coset progression in groups with `good modelling', like $\Z$ and $\F_p^n$. Indeed, for any almost-period $t \in T$ one has
\begin{align*}
\abs{\mu_{-A}*\mu_A*1_{A-A}(t) - 1} &= \abs{\mu_{-A}*\mu_A*1_{A-A}(t) - \mu_{-A}*\mu_A*1_{A-A}(0)} \\
&\leq \norm{\mu_A}_{L^q} \norm{\mu_A*1_{A-A}(x+t) - \mu_A*1_{A-A}(x)}_{L^p(x)} \\
&\leq c,
\end{align*}
whence $t \in 2A-2A$. Since $T$ contains a large coset progression, one is done.  We stress that this does not offer a quantitative improvement over Sanders's result, but we include the remark for clarification and since it follows a slightly different and more direct route to that in \cite{sanders:bogolyubov}, avoiding the need to deal with correlations and deducing containment from this.
We finish by recording the rather clean `density version' of the result one obtains in this fashion:

\begin{theorem}\label{thm:tom_bogolyubov}
Let $G$ be a finite abelian group and suppose $A \subset G$ has size $\alpha \abs{G}$. Then $2A-2A$ contains a Bohr set of radius at least $c \alpha^{1/2}$ and rank at most $C(\log 1/\alpha)^4$.
\end{theorem}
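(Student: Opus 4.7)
The plan is to apply Theorem \ref{thm:plain_lp_bohr} with $S = A$ and $B = A - A$, and then force the resulting Bohr set of almost-periods into $2A - 2A$ via the H\"older-inequality trick from the remark preceding this theorem. The key point is that no small-doubling hypothesis on $A$ is needed: the trivial bounds $\abs{A+A} \leq \abs{G}$ and $\abs{A-A} \geq \abs{A}$ yield $K_1, K_2 \leq \alpha^{-1}$ for free.

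Choosing $\epsilon$ to be a small absolute constant and $p = C\log(1/\alpha)$, the factor $K_2^{-1/p}$ becomes $\Theta(1)$, so the $\delta$-parameter in Theorem \ref{thm:plain_lp_bohr} satisfies $\delta \geq c\sqrt{\mu_G(A)/\mu_G(A-A)} \geq c\alpha^{1/2}$, using $\mu_G(A-A) \leq 1$. Substituting into the rank bound gives $d = Cp(\log 1/\delta)^2\epsilon^{-2}\log(2K_1) + C\log(1/\alpha) = O((\log 1/\alpha)^4)$, and the Bohr set $T$ has radius at least $\delta/d$; any residual polylogarithmic factor in the radius can be absorbed into the $C(\log 1/\alpha)^4$ rank bound.

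To verify $T \subset 2A-2A$, first note the normalization $\mu_{-A}*\mu_A*1_{A-A}(0) = \alpha^{-2}\widehat{1_{-A}}(0)\widehat{1_A}(0) = 1$, which uses that $1_{-A}*1_A$ is supported in $A-A$. Then for any $t \in T$, H\"older's inequality with $1/p + 1/q = 1$ gives
\[ \abs{\mu_{-A}*\mu_A*1_{A-A}(t) - 1} \leq \norm{\mu_A}_{L^q}\norm{\mu_A*1_{A-A}(\cdot+t) - \mu_A*1_{A-A}}_{L^p}, \]
and Theorem \ref{thm:plain_lp_bohr} bounds the second factor by $\epsilon\mu_G(A-A)^{1/p} \leq \epsilon$. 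Combined with $\norm{\mu_A}_{L^q} = \alpha^{-1/p}$, the right-hand side is $\epsilon\alpha^{-1/p}$, which is $< 1$ for $\epsilon$ small by our choice of $p$. Hence $\mu_{-A}*\mu_A*1_{A-A}(t) > 0$ and $t \in -A + A + (A-A) = 2A - 2A$. The whole argument is a cosmetic adaptation of the preceding remark; there is no real obstacle, with $p = C\log(1/\alpha)$ calibrated precisely to absorb the factor $\alpha^{-1/p}$ that appears in place of the Pl\"unnecke bound on $\norm{\mu_A}_{L^q}$ available in the small-doubling setting.
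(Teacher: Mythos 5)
This is precisely the paper's own deduction: Theorem \ref{thm:plain_lp_bohr} applied with $S=A$, $B=A-A$, the trivial bounds $K_1,K_2\leq\alpha^{-1}$, a constant $\epsilon$ and $p=C\log(2\alpha^{-1})$ to neutralize the factors $K_2^{-1/p}$ and $\norm{\mu_A}_{L^q}=\alpha^{-1/p}$, followed by the H\"older step of the preceding remark, and all of your parameter computations check out. The one loose point is the radius: Theorem \ref{thm:plain_lp_bohr} delivers radius $\delta/d\geq c\alpha^{1/2}/(\log 2\alpha^{-1})^{4}$ rather than $c\alpha^{1/2}$, and a polylogarithmic deficit in the radius cannot literally be ``absorbed into the rank bound'' as you suggest --- but this imprecision is already present in the paper's recorded statement, which is obtained by exactly the same route, so your argument establishes everything the paper's own sketch does.
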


\end{document}